\newcommand\mtop{1in}
\newcommand\mbottom{1in}
\newcommand\mleft{1in}
\newcommand\mright{1in}
\newtheorem{Definition}{Definition}
\newtheorem*{MainTheorem*}{Main Theorem}
\theoremstyle{definition}
\newtheorem{Example}{Example}[section]
\newtheorem{Theorem}{Theorem}[section]
\newtheorem{Prop}{Proposition}[section]
\newtheorem{Lemma}{Lemma}[section]
\DeclareMathOperator{\sgn}{sgn}
\renewcommand \l {\lambda}
\renewcommand \r {\rho}
\newcommand*{\exone}[1]{{#1}_{\widehat{1}}}
\newcommand*{\extwo}[1]{{#1}_{\widehat{1},\widehat{2}}}
\newcommand*{\zetatwo}[2]{\zeta_{#1,#2}}
\newcommand*{\HL}[1]{R_{#1}(x;t)}
\newcommand*{\WD}[1]{\Delta_{#1}}
\newcommand*{\defWD}[2]{\Delta_{#1}(#2)}
\newcommand*{\MHL}[2]{P_{#1}(#2)}
\newcommand*{\HLtwo}[2]{R_{#1}(#2)}
\newcommand*{\R}{{\phi}}
\newcommand*{\conc}[2]{{#1} \| {#2}}
\newcommand*{\GT}{SGT} 
\newcommand*{\G}{GT}
\newcommand*{\tokbij}{{\Theta}}
\newcommand*{\thetaset}{{\Omega}}
\newcommand*{\GTtwo}{{GT_2}}
\newcommand*{\row}[1]{{r_{#1}}}
\renewcommand*{\a}{\alpha}
\renewcommand*{\tau}{a}
\newcommand*{\deltaone}[2]{\delta_{#1}{(x_{#2})}}
\newcommand*{\deltatwo}[3]{\delta_{{#1},{#2}}{(x_{#3})}}
\newcommand*{\deltathree}[4]{\delta_{#1,#2,#3}{(x_{#4})}}
\newcommand*{\defdeltaone}[3]{\delta_{#1}{(x_{#2};#3)}}
\newcommand*{\defdeltatwo}[4]{\delta_{{#1},{#2}}{(x_{#3};#4)}}
\newcommand*{\defdeltathree}[5]{\delta_{#1,#2,#3}{(x_{#4};#5)}}
\newcommand*{\defProdone}[2]{\frac{\defdeltaone{#1}{#2}{t}}{\deltaone{#1}{#2}}}
\newcommand*{\defProdtwo}[3]{\frac{\defdeltatwo{#1}{#2}{#3}{t}}{\deltatwo{#1}{#2}{#3}}}
\newcommand*{\defProdthree}[4]{\frac{\defdeltathree{#1}{#2}{#3}{#4}{t}}{\deltathree{#1}{#2}{#3}{#4}}}
\newcommand*{\M}[2]{  M(#1;#2) }
\title{A Generalization of Tokuyama's Formula to the Hall-Littlewood Polynomials} 
\author{Vineet Gupta}
\address[Vineet Gupta]{Department of Mathematics, Stanford University, Stanford, CA 94305}
\email{vineetg@stanford.edu}
\author{Uma Roy}
\address[Uma Roy]{Department of Mathematics, Boston University, Boston, MA 02215}
\email{uma.roy.us@gmail.com}
\author{Roger Van Peski}
\address[Roger Van Peski]{Department of Mathematics, Princeton University, Princeton, NJ 08540}
\email{rpeski@princeton.edu}
\begin{document}

\maketitle

\begin{abstract}
A theorem due to Tokuyama expresses Schur polynomials in terms of Gelfand-Tsetlin patterns, providing a deformation of the Weyl character formula and two other classical results, Stanley's formula for the Schur $q$-polynomials and Gelfand's parametrization for the Schur polynomial. We generalize Tokuyama's formula to the Hall-Littlewood polynomials by extending Tokuyama's statistics. Our result, in addition to specializing to Tokuyama's result and the aforementioned classical results, also yields connections to the monomial symmetric function and a new deformation of Stanley's formula.

\end{abstract}

\section{Introduction}

\indent Schur polynomials, a special class of symmetric polynomials, play an important role in representation theory. They encode the characters of irreducible representations of general linear groups, which may be computed via the Weyl character formula. Tokuyama \cite{tokuyama} gave a deformation of the Weyl character formula for $\text{GL}_n(\mathbb{C})$ (Cartan type $A_n$). This formula expresses Schur polynomials in terms of statistics obtained from strict Gelfand-Tsetlin (GT) patterns and includes two other classical results as specializations, the Gelfand parameterization formula for Schur polynomials \cite{gelfand} and Stanley's formula for the Schur $q$-polynomials \cite{stanley}.

    The ideas in \cite{tokuyama} have been extended to other Cartan types. For example, combinatorial expressions of deformations of the Weyl denominator for Cartan types $B_n$, $C_n$, and $D_n$ were given by Okada \cite{okadadenom} and Simpson \cite{simpson1}, \cite{simpson2}. Hamel and King in \cite{hamel02} replicate Tokuyama's deformation of the Weyl character formula in type $C_n$, and Friedberg and Zhang \cite{friedberg2014tokuyama} derived a similar result for type $B_n$. These results are also often expressed using other combinatorial objects such as Young tableaux \cite{fulton1997young}, alternating sign matrices \cite{tokuyama}, \cite{okadadenom}, \cite{hamel2007bijective}, and 6-vertex or ice-type models \cite[Chapter 19]{brubaker2006weyl}, \cite{brubaker2011schur}, \cite{tabony2011deformations}. Hamel and King express the type $A_n$ case \cite{hamel2007bijective} and type $C_n$ case \cite{hamel02} using 6-vertex partition functions, and Brubaker and Schultz \cite{brubaker20146} give Tokuyama-type deformations for types $A_n$, $B_n$, $C_n$, and $D_n$ using modified ice models. One can also try to generalize Tokuyama's ideas to other symmetric polynomials, such as the Hall-Littlewood polynomials, and this is the problem we consider.
    
    The Hall-Littlewood polynomials are a class of symmetric polynomials which may be viewed as a generalization of the Schur polynomials by a deformation along a parameter $t$. The Hall-Littlewood polynomials also interpolate between the dual bases of the Schur polynomials and the monomial symmetric functions at $t=0$ and $t=1$, respectively.  These polynomials are used to determine characters of Chevalley groups over local and finite fields \cite{tokuyama}. Stanley's formula expresses the Hall-Littlewood polynomials at the singular value $t=-1$ (commonly known as the Schur $q$-polynomials \cite[Chapter III]{macdonaldbook}) as a summation over strict GT patterns. However, there does not exist an analogue of Tokuyama's formula expressing the Hall-Littlewood polynomials as a summation over combinatorial statistics from Gelfand-Tsetlin patterns. In this paper we provide such a result. Theorem \ref{MainTheorem2*}, in addition to linking the classical specializations of Tokuyama, reduces to a different deformation of Stanley's formula at $t=-1$, and a formula for the monomial symmetric functions in terms of GT patterns at $t=1$.

\section{Preliminary Notation and a Theorem due to Tokuyama}\label{prelim}

A partition $\l = (\l_1,\l_2,\ldots,\l_n)$ is a finite tuple of nonnegative integers, referred to as \emph{parts}. Unless otherwise stated, a partition will be assumed to be weakly decreasing, i.e.  $\l_i \geq \l_{i+1}$ for all $i$. The \emph{length} of a partition $\l$ is the number of parts in $\l$, and the \emph{size} of $\l$ is defined as $|\l|=\sum_{i=1}^n \lambda_i$. Addition of partitions of equal length is done component-wise, and given two partitions $\l$ and $\mu$ of lengths $n$ and $m$ respectively, we express their concatenation as the tuple $\conc{\l}{\mu} = (\l_1,\ldots,\l_n,\mu_1,\ldots,\mu_m)$. We will typically use $\a$ to denote some strictly decreasing partition, often taking $\a = \lambda + \rho$ when $\lambda$ is defined. 

\noindent Define the partition $\r_n$ as 
\begin{equation}
\rho_n=(n-1,n-2,\ldots,1,0).
\end{equation}
We often write $\r$ in place of $\r_n$ as the value of $n$ is clear from context. 

We write the polynomial $f(x)$ as short for $f(x_1,\ldots, x_n)$, and similarly $x^\lambda = x_1^{\lambda_1}x_2^{\l_2} \ldots  x_n^{\lambda_n}$. Furthermore, a permutation $\sigma \in S_n$ acts on $f(x)$ by permuting the variables $x_i$.  

\noindent The Weyl denominator $\WD{n}$ is given by the formula
\begin{equation} 
\WD{n} = \prod_{1 \leq i<j \leq n} (x_i-x_j).
\end{equation} 

A deformation of the Weyl denominator $\defWD{n}{t}$ is given by the similar formula
\begin{equation}\label{defWD}
\defWD{n}{t} = \prod_{1 \leq i<j \leq n} (x_i-tx_j). 
\end{equation}
Note that $\defWD{n}{1} = \WD{n}$ and $\defWD{n}{0} = x^\rho$. 

\begin{Theorem}[Weyl Character Formula for $\text{GL}_n$]\label{schur}
The Schur polynomial corresponding to the partition $\l$ of length $n$ is 
\begin{equation}  s_\lambda(x)
= \sum_{\sigma \in S_n} \sigma \left(\frac{x^{\lambda + \rho}}{\WD{n}} \right)
. \end{equation}
\end{Theorem}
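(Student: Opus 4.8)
The plan is to start from the bialternant (Jacobi) definition of the Schur polynomial,
$$s_\lambda(x) = \frac{\det\!\left(x_i^{\lambda_j + n - j}\right)_{1 \le i,j \le n}}{\det\!\left(x_i^{n-j}\right)_{1 \le i,j \le n}},$$
and to show that the right-hand side of the claimed identity reproduces this ratio. The first observation is that the denominator is the Vandermonde determinant, which expands as $\det\!\left(x_i^{n-j}\right) = \prod_{1 \le i < j \le n}(x_i - x_j) = \WD{n}$. The second is that, since $\rho = (n-1, n-2, \ldots, 1, 0)$, setting $\alpha = \lambda + \rho$ gives $\alpha_j = \lambda_j + n - j$, so the exponents appearing in the numerator determinant are precisely those of $x^{\lambda+\rho}$ after permutation. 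These two identifications reduce the target to the statement that $\sum_{\sigma \in S_n} \sigma\!\left(x^{\lambda+\rho}/\WD{n}\right)$ equals $\det\!\left(x_i^{\alpha_j}\right)/\WD{n}$.

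For the key step I would exploit the antisymmetry of the Weyl denominator. Since permuting the variables in $\WD{n} = \prod_{i<j}(x_i - x_j)$ multiplies it by the sign of the permutation, we have $\sigma(\WD{n}) = \sgn(\sigma)\,\WD{n}$, and hence
$$\sigma\!\left(\frac{x^{\lambda+\rho}}{\WD{n}}\right) = \frac{\sigma\!\left(x^{\lambda+\rho}\right)}{\sgn(\sigma)\,\WD{n}} = \sgn(\sigma)\,\frac{\sigma\!\left(x^{\lambda+\rho}\right)}{\WD{n}},$$
using that $\sgn(\sigma) = \pm 1$. Summing over $S_n$ and factoring out the common denominator yields $\WD{n}^{-1}\sum_{\sigma \in S_n} \sgn(\sigma)\,\sigma\!\left(x^{\alpha}\right)$. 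Because $\sigma$ sends $x^{\alpha} = \prod_i x_i^{\alpha_i}$ to a reindexing of $\prod_i x_{\sigma(i)}^{\alpha_i}$, this signed sum is exactly the Leibniz (permutation) expansion of $\det\!\left(x_i^{\alpha_j}\right)$, up to the harmless transpose arising from the choice of action convention, since a determinant equals that of its transpose. This delivers $\det\!\left(x_i^{\alpha_j}\right)/\WD{n}$, matching the bialternant and completing the identification.

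The main obstacle is conceptual rather than computational, and it lies in which definition of $s_\lambda$ one takes as primitive. If the Schur polynomial is introduced combinatorially, for instance as the generating function over semistandard Young tableaux of shape $\lambda$, then the equality of that combinatorial object with the bialternant ratio is the genuine content and must be invoked or established separately, classically via the Lindström--Gessel--Viennot lemma or the Pieri rule. Granting the bialternant definition, the only remaining subtlety is checking that $\det\!\left(x_i^{\alpha_j}\right)/\WD{n}$ is a genuine polynomial: this holds because the numerator, being alternating, vanishes whenever $x_i = x_j$ and is therefore divisible by each factor $(x_i - x_j)$, hence by the entire Vandermonde. I expect the sign bookkeeping and this divisibility remark to be the only points requiring care.
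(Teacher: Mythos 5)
Your proof is correct, but there is no proof in the paper to compare it with: the paper states this theorem as classical background (the Weyl character formula for $\mathrm{GL}_n$, equivalently Jacobi's bialternant formula) and never proves it; indeed, within the paper the formula effectively serves as the working definition of $s_\lambda$, since the Hall-Littlewood polynomials are defined by the analogous sum over $S_n$ and the paper simply remarks that $\HLtwo{\l}{x;0}=s_\lambda(x)$. Your argument is the standard derivation of the equivalence between the sum-over-$S_n$ form and the bialternant ratio: the identification $\WD{n}=\det\bigl(x_i^{n-j}\bigr)$, the antisymmetry $\sigma(\WD{n})=\sgn(\sigma)\,\WD{n}$ used to extract signs, and the recognition of $\sum_{\sigma\in S_n}\sgn(\sigma)\,\sigma\bigl(x^{\lambda+\rho}\bigr)$ as the Leibniz expansion of $\det\bigl(x_i^{\lambda_j+n-j}\bigr)$ (up to a harmless transpose) are all sound, as is the divisibility remark guaranteeing that the alternating numerator is divisible by the full Vandermonde product. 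The only caveat is the one you flag yourself: this constitutes a complete proof only if the bialternant ratio is taken as the definition of $s_\lambda$; if one instead starts from the combinatorial (tableau) definition, the equality with the bialternant is the genuine content and must be supplied separately, e.g.\ via Lindstr\"om--Gessel--Viennot or the Pieri rule.
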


We may define the Hall-Littlewood polynomials analogously using the deformation of the Weyl denominator as follows.
\begin{Definition}
\label{HL}
The Hall-Littlewood polynomial for a partition $\l$ of length $n$ is
\begin{equation}
\HL{\lambda} = \sum_{\sigma \in S_n} \sigma \left( x^{\l} \frac{\defWD{n}{t}}{\WD{n}}\right)
\end{equation}
\end{Definition}
\noindent It is not difficult to see that $\HLtwo{\l}{x;0} = s_\lambda(x)$ and $\HLtwo{\lambda}{x;1} = \sum_{\sigma \in S_n} \sigma(x^\l)$, the monomial symmetric function $m_{\l}(x)$. 

Note: The Hall-Littlewood polynomials defined in \cite[Chapter III]{macdonaldbook} are given by
\begin{equation}
\MHL{\l}{x; t} = v_{\l}(t)\HL{\l},
\end{equation}
for a stabilizing factor $v_{\l}(t)$. Since the stabilizing factor may easily be multiplied to Theorem \ref{MainTheorem2*} if necessary, we choose to omit it in this paper, and refer to the polynomials $\HL{\l}$ as the Hall-Littlewood polynomials. 

\newpage

\begin{Definition}
A Gelfand-Tsetlin (GT) pattern is a triangular array of nonnegative integers of the form

\vspace{3mm}
\centerline{$\tau_{1,1} \;\;\;\;\;\;\; \tau_{1,2} \;\;\;\;\;\;\; \tau_{1,3} \;\;\;\;\;\;\; \ldots \;\;\;\;\;\;\; \tau_{1,n}$}

\centerline{$\,\tau_{2,2} \;\;\;\;\;\;\;\; \tau_{2,3} \;\;\;\;\;\;\;\; \ldots \;\;\;\;\;\;\;\; \tau_{2,n} $}

\centerline{ $ \ldots \;\;\;\;\;\;\;\; \ldots \;\;\;\;\;\;\;\; \ldots$}

\centerline{$\,\,\, \tau_{n-1,n-1} \;\; \tau_{n-1,n}$}

\centerline{$\,\,\,\,\tau_{n,n}$}
\vspace{3mm}
\noindent where each row $\row{i} = (\tau_{i,i},\tau_{i,i+1},\ldots,\tau_{i,n})$

is a \emph{weakly decreasing} partition, and two consecutive rows $r_i = (\tau_{i,i},\ldots,\tau_{i,n})$ and $r_{i+1} = (\tau_{i+1,i+1},\ldots,\tau_{i+1,n})$ satisfy the \emph{interleaving condition}:
\begin{equation} \tau_{i-1,j-1}\geq \tau_{i,j} \geq \tau_{i-1,j}. \end{equation}
\end{Definition}

For a partition $\a$, let $\G(\a)$ be the set of all GT patterns of top row $\a=r_1$. A \emph{strict} GT pattern is one in which each row $r_i$ is strictly decreasing. Given a partition $\a$, write $\GT(\a) \subseteq \G(\a)$ to be the set of all strict GT patterns with top row $\a$. 

\begin{Definition}[\cite{tokuyama}] \label{GTstats}
An entry $\tau_{i,j}$ in a GT pattern is 
\begin{itemize}
\item \emph{left-leaning} if $\tau_{i,j}=\tau_{i-1,j-1}$,
\item \emph{right-leaning} if $\tau_{i,j} = \tau_{i-1,j}$, and
\item \emph{special} if it is neither left-leaning nor right-leaning.
\end{itemize}

The quantities $l(T)$, $r(T)$, and $z(T)$  denote the number of left-leaning, right-leaning and special entries in a GT pattern respectively. 
\end{Definition}
\noindent Given a GT pattern with $n$ rows, define the statistic $m_i(T)$ as 
\begin{equation} 
m_i(T) = \begin{cases} 
|\row{i}|-|\row{i+1}| & \text{ for }\;1\leq i \leq n-1 \\ |\row{i}| & \text{ for }\;i=n \end{cases}, 
\end{equation}
and $m(T)$ as
\begin{equation} 
m(T)=\left(m_1(T),\ldots,m_n(T)\right).
\end{equation}

We now state the following theorem due to Tokuyama, which we generalize in the rest of this paper.

\begin{Theorem}[\cite{tokuyama}] \label{toks}
For any weakly decreasing partition $\lambda$ of length $n$, we have
\begin{equation} \label{tokseq}
\defWD{n}{q} \cdot s_\l(x) = \sum_{T\in \GT{(\lambda + \rho)}}  (1-q)^{z(T)}(-q)^{l(T)}x^{m(T)}.
\end{equation}

\end{Theorem}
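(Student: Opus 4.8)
The plan is to induct on the length $n$ of $\l$, peeling off the top row of each pattern. Write $\a=\l+\r$ and, for a strict partition $\a$ of length $n$, abbreviate the right-hand side of \eqref{tokseq} by $G_n(\a;x_1,\dots,x_n)=\sum_{T\in\GT{(\a)}}(1-q)^{z(T)}(-q)^{l(T)}x^{m(T)}$. The base case $n=1$ is immediate, since then $\defWD{1}{q}=1$, $s_\l(x_1)=x_1^{\l_1}$, and the unique pattern contributes $x_1^{\l_1}$. For the inductive step, note that every $T\in\GT{(\a)}$ is determined by its top row $\a$ together with the strict pattern $T'$ of length $n-1$ formed by rows $2,\dots,n$, whose top row $\beta=\row{2}$ is a strict partition interleaving $\a$. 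The leaning statistics split additively: $l(T)$ and $z(T)$ equal the numbers of left-leaning and special entries of $\beta$ (measured against $\a$) plus $l(T')$ and $z(T')$, while the weight factors as $x^{m(T)}=x_1^{|\a|-|\beta|}\,(x_2,\dots,x_n)^{m(T')}$. This yields the recursion
\[
G_n(\a;x_1,\dots,x_n)=\sum_{\beta}\Big(\prod_{i=1}^{n-1}c_i(\beta)\Big)\,x_1^{|\a|-|\beta|}\,G_{n-1}(\beta;x_2,\dots,x_n),
\]
where $\beta$ runs over strict partitions interleaving $\a$ and $c_i(\beta)$ equals $-q$, $1-q$, or $1$ according as the $i$-th entry of $\beta$ is left-leaning, special, or right-leaning.

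Applying the inductive hypothesis $G_{n-1}(\beta;x_2,\dots,x_n)=\defWD{n-1}{q}\,s_{\beta-\r_{n-1}}(x_2,\dots,x_n)$ (with the deformed Weyl denominator $\defWD{n-1}{q}$ evaluated at $x_2,\dots,x_n$), together with the factorization $\defWD{n}{q}=\prod_{j=2}^{n}(x_1-qx_j)\cdot\defWD{n-1}{q}$ and cancellation of the common factor $\defWD{n-1}{q}$, the theorem reduces to the single-row identity
\[
\prod_{j=2}^{n}(x_1-qx_j)\,s_\l(x_1,\dots,x_n)=\sum_{\beta}\Big(\prod_{i=1}^{n-1}c_i(\beta)\Big)\,x_1^{|\a|-|\beta|}\,s_{\beta-\r_{n-1}}(x_2,\dots,x_n).
\]
This identity is the heart of the argument, and everything preceding it is formal.

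To prove it I would expand both sides in the basis $\{s_\nu(x_2,\dots,x_n)\}$ and compare coefficients. On the left, the branching rule gives $s_\l(x_1,\dots,x_n)=\sum_{\mu}x_1^{|\l|-|\mu|}s_\mu(x_2,\dots,x_n)$ over partitions $\mu$ with $\l_{i+1}\le\mu_i\le\l_i$, while $\prod_{j=2}^{n}(x_1-qx_j)=\sum_{r\ge0}(-q)^r e_r(x_2,\dots,x_n)\,x_1^{\,n-1-r}$; the Pieri rule $e_r\,s_\mu=\sum_\nu s_\nu$, summed over $\nu$ with $\nu/\mu$ a vertical $r$-strip, then turns the left side into a sum over pairs $(\mu,\nu)$. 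Since $r=|\nu|-|\mu|$ for each contributing pair, every $x_1$-exponent collapses to $x_1^{\,|\l|-|\nu|+n-1}$, which matches the exponent $|\a|-|\beta|$ on the right once we set $\beta=\nu+\r_{n-1}$. Thus the coefficient of $s_\nu$ on the left is $x_1^{\,|\l|-|\nu|+n-1}\sum_{\mu}(-q)^{|\nu|-|\mu|}$, the sum running over $\mu$ with $\max(\l_{i+1},\nu_i-1)\le\mu_i\le\min(\l_i,\nu_i)$, whereas on the right it is $x_1^{\,|\l|-|\nu|+n-1}\prod_{i}c_i(\beta)$.

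The remaining, decisive step is to show these coefficients agree, i.e. that $\sum_{\mu}(-q)^{|\nu|-|\mu|}=\prod_{i}c_i(\beta)$. The key observation is that the constraint intervals decouple: any admissible $\mu$ satisfies $\mu_i\ge\l_{i+1}\ge\mu_{i+1}$, so the weakly-decreasing condition on $\mu$ is automatic and the sum factors as $\prod_i\big(\sum_{\mu_i}(-q)^{\nu_i-\mu_i}\big)$. Translating the leaning conditions into constraints on $\nu_i$—left-leaning means $\nu_i=\l_i+1$, right-leaning means $\nu_i=\l_{i+1}$, and special means $\l_{i+1}<\nu_i\le\l_i$—a one-line evaluation of each factor returns $-q$, $1$, and $1-q$ respectively, exactly $c_i(\beta)$; out-of-range $\nu_i$ give an empty interval on the left and no admissible $\beta$ on the right, so both vanish. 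I expect the main obstacle to be precisely this bookkeeping: correctly matching the three leaning cases to the endpoints of the $\mu_i$-intervals and verifying that the partition constraint genuinely decouples, since the inductive skeleton and the invocations of branching and Pieri are otherwise routine.
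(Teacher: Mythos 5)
Your proof is correct, and it takes a genuinely different route from the paper --- indeed the paper never proves Theorem \ref{toks} at all: it cites \cite{tokuyama} and only recovers the formula afterwards as the $t=0$ specialization of its Hall--Littlewood generalization (Theorems \ref{maintheorem} and \ref{MainTheorem2*}). Your inductive skeleton --- peel off the top row, split the statistics, and reduce everything to the one-row identity $\prod_{j=2}^{n}(x_1-qx_j)\,s_\lambda(x)=\sum_{\beta}\bigl(\prod_i c_i(\beta)\bigr)x_1^{|\alpha|-|\beta|}\,s_{\beta-\rho_{n-1}}(x_2,\dots,x_n)$ --- is exactly the recursion \eqref{tokGT2} that the paper records (without proof) as an equivalent restatement of Tokuyama's formula. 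Where you differ is in how that one-row identity is established: the paper's analogue for general $t$ is proved by direct manipulation of the sum-over-$S_n$ definition of $\HL{\lambda}$, isolating the variable $x_1$ (Lemmas \ref{lambda12}--\ref{d}), whereas you stay inside classical Schur calculus, combining the branching rule with the dual Pieri rule and evaluating the resulting $(-q)$-geometric sums over the decoupled intervals $[\max(\lambda_{i+1},\nu_i-1),\min(\lambda_i,\nu_i)]$. The delicate steps in your write-up all check out: weak decrease of $\mu$ is indeed automatic from $\mu_i\ge\lambda_{i+1}\ge\mu_{i+1}$; the $x_1$-exponents collapse correctly since $r=|\nu|-|\mu|$ and $|\alpha|-|\beta|=|\lambda|-|\nu|+(n-1)$; and out-of-range $\nu_i$ annihilate both sides. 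What your route buys is brevity and an explanation of the three Tokuyama weights $-q$, $1-q$, $1$ as one- or two-term geometric sums attached to vertical strips; what it gives up is extensibility, since the Hall--Littlewood branching and Pieri rules acquire $t$-dependent coefficients that destroy the clean decoupling into independent intervals --- which is precisely why the paper resorts to the heavier denominator manipulations that do survive the $t$-deformation.
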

\section{Additional Statistics on Gelfand-Tsetlin Patterns}\label{addstats}

To generalize Theorem \ref{toks} to the Hall-Littlewood polynomials, the previous statistics from Definition \ref{GTstats} of \cite{tokuyama} prove inadequate. Instead of only labelling each entry as left-leaning, right-leaning or special, we need to give each entry both a \emph{left-sided} property $p_l(\tau_{i,j})$ and a \emph{right-sided} property $p_r(\tau_{i,j})$. The left-sided property encodes the relationship the entry $\tau_{i,j}$ has to the entry directly above it and to its left, namely $\tau_{i-1,j-1}$. Similarly, the right-sided property encodes the relationship that $\tau_{i,j}$ has to $\tau_{i-1,j}$. 

\noindent The left-sided properties of an entry $p_l(\tau_{i,j})$ are assigned as
\begin{equation} \label{pl}
   p_l(\tau_{i,j}) = 
     \begin{cases}
       l \text{ (left)} & \text{if $\tau_{i,j} = \tau_{i-1,j-1}$}\\
       al \text{ (almost-left)} & \text{if $\tau_{i,j} = \tau_{i-1,j-1} - 1$} \\
       s \text{ (special)} & \text{otherwise}
     \end{cases},
\end{equation} 
and similarly, the right-sided properties of an entry $p_r(\tau_{i,j})$ are assigned as
\begin{equation} \label{pr}
p_r(\tau_{i,j}) = 
     \begin{cases}
       r \text{ (right)} & \text{if $\tau_{i,j} = \tau_{i-1,j}$}\\
       ar \text{ (almost-right)} & \text{if $\tau_{i,j} = \tau_{i-1,j} + 1$} \\
       s \text{ (special)} & \text{otherwise}
     \end{cases}.
\end{equation}

\begin{Definition} \label{c,d}
For an entry $\tau_{i,j}$ with $i>1$, we define

\begin{equation}
   c(\tau_{i,j}) =
     \begin{cases}
       0 & \text{if $p_l(\tau_{i,j}) = l$ or $p_r(\tau_{i,j})=r$}\\
       (1-t)(1-q) & otherwise \\
     \end{cases}
\end{equation} 
and for a property $p$, we define

\begin{equation}
   g(p) = 
     \begin{cases}
       -q & \text{if $p=l$} \\
       t & \text{if $p=al$} \\
       1 & \text{if $p=r$} \\
       -qt & \text{if $p=ar$} \\
       0 & \text{if $p=s$} \\
     \end{cases}
\end{equation}

\end{Definition}

With these we define two more functions: the first is a generalization of the expressions $(-q)$ and $(1-q)$ from Tokuyama's formula;  and the second considers the relation of an entry $\tau_{i,j}$ to the two entries above it in the GT pattern. 

\begin{Definition}\label{d,w}
For an entry $\tau_{i,j}$ with $i>1$, we define
\begin{equation}\label{def-w} w(\tau_{i,j}) = c(\tau_{i,j}) + g(p_l(\tau_{i,j})) + g(p_r(\tau_{i,j})).\end{equation}
For an entry $\tau_{i,j}$ with $i<j<n$, we define
\begin{equation}\label{def-d} d(\tau_{i,j})=g(p_r(\tau_{i+1,j}))\cdot g(p_l(\tau_{i+1,j+1})).\end{equation}
\end{Definition}

\begin{Example}\label{example:w,d}
Given the following segment of a GT pattern:

\centerline{$ 5 \; \; 3 \; \; 1 $}

\centerline{$4 \; \; 3 $}

\noindent We see that the $4$ has properties $al$ and $ar$. Thus $c(4) = (1-q)(1-t)$ and $w(4) = (1-q)(1-t) + t - qt = 1-q$. Similarly, we have $c(3) = 0$ and $w(3) = 0 - q + 0 = -q$. For the entries in the second row, we find $g(p_r(4)) = -qt$ and $g(p_l(3))= -q$, thus the 3 in the first row gives $d(3)=(-q t)\cdot (-q) = q^2t$. 
\end{Example}

For the reader's convenience, we provide Table \ref{table:wd} which lists all possible values for $w(\tau_{i,j})$ and $d(\tau_{i,j})$ that we may need to consider. One may notice that we omit the case for $w(\tau_{i,j})$ when $(p_l(\tau_{i,j}),p_r(\tau_{i,j})) = (l,r)$; this is simply because we need not consider this case at any point in our work.
\vspace{-1ex}
\begin{table}[h] 
\caption{Possible $w(\tau_{i,j})$ and $d(\tau_{i,j})$ values for an entry $\tau_{i,j}$.} \label{table:wd}
\centering
\begin{tabular}{|c|c|c|c|c|c|} \cline{1-2} \cline{4-6}
$(p_l(\tau_{i,j}),p_r(\tau_{i,j}))$ & $w(\tau_{i,j})$ & \hspace{2ex} & $p_r(\tau_{i+1,j})$ & $p_l(\tau_{i+1,j+1})$ & $d(\tau_{i,j})$ \\ \cline{1-2} \cline{4-6}

$(l,s)$ & $-q$ & \hspace{2ex} & 
    $s$ & $s$ & $0$ \\ \cline{1-2} \cline{4-6}
$(s,r)$ & $1$ & \hspace{2ex} & 
    $s$ & $l$, $al$ & $0$ \\ \cline{1-2} \cline{4-6}
$(l,ar)$ & $-q-qt$ & \hspace{2ex} & 
    $r$, $ar$ & $s$ & $0$\\ \cline{1-2} \cline{4-6}
$(al,r)$ & $1+t$ & \hspace{2ex} & 
    $r$ & $l$ & $-q$ \\ \cline{1-2} \cline{4-6}
$(s,ar)$ & $1-q-t$ & \hspace{2ex} &
    $ar$ & $l$ & $q^2t$ \\ \cline{1-2} \cline{4-6}
$(al,s)$ & $1-q+qt$ & \hspace{2ex} &
    $r$ & $al$ & $t$ \\ \cline{1-2} \cline{4-6}
$(al,ar)$ & $(1-q)$ & \hspace{2ex} & 
    $ar$ & $al$ & $-qt^2$ \\ \cline{1-2} \cline{4-6}
$(s,s)$ & $(1-q)(1-t)$ &  \multicolumn{3}{c}{} \\  \cline{1-2}
\end{tabular}
\end{table}

Example \ref{example:w,d} illustrates that to define $w(\tau_{i,j})$ and $d(\tau_{i,j})$, we only need to know two consecutive rows of a GT pattern. This leads us to the following definition. 
\begin{Definition} 
Suppose $\alpha$ is a strictly decreasing partition. We define $\GTtwo(\alpha)$ to be the set of all partitions $\mu$ such that the length of $\mu$ is one less than that of $\alpha$, and  $\alpha_i \geq \mu_i \geq \alpha_{i+1}$ for all $i$. For $\alpha$ of length 1, we let $\GTtwo(\alpha) = \{\emptyset\}$. 
\end{Definition}
\noindent This definition ensures that $\alpha$ and $\mu$ satisfy the interleaving condition, and so $\mu$ would be a valid weakly decreasing row directly below a row $\alpha$ in a GT pattern. Arranging $\alpha$ and $\mu$ in this manner, we are able to extend Definition \ref{d,w} to parts of $\mu$ and $\alpha$, and define $w(\mu_i)$ and $d(\alpha_i)$ for all appropriate $i$. 
\begin{Definition}\label{matrix}
Let $\alpha$ and $\mu$ be partitions with $\alpha$ strictly decreasing of length $n$ and $\mu \in \GTtwo(\alpha)$. Then we define
\begin{equation} 
\M{\alpha}{\mu} = \det
\begin{bmatrix}
w(\mu_1) & 1 & 0  & \; \ldots & 0 & 0\\
d(\alpha_2) & w(\mu_2) & 1  & \; \ldots & 0 & 0 \\
0 & d(\alpha_3) & w(\mu_3)  & \; \ldots & 0 & 0\\
\vdots & \vdots & \vdots & \ddots & \vdots &\vdots \\
0     & 0       & 0       &  \ldots & w(\mu_{n-2}) & 1 \\
0     & 0       & 0         & \ldots  & d(\alpha_{n-1})& w(\mu_{n-1}) \\
\end{bmatrix}. \end{equation}
\end{Definition}
\noindent If $\alpha$ is of length 1, and $\mu = \emptyset \in \GTtwo(\alpha)$, we define $\M{\alpha}{\mu} =1$. We also extend this notation to any pair $(\alpha; \mu)$ by defining $\M{\alpha}{\mu} = 0$ whenever $\mu \notin \GTtwo(\alpha)$.

\section{A Recursive Statement of Main Theorem}

We first notice that Tokuyama's formula can be restated as a summation over the set $\GTtwo(\a)$, as shown in \eqref{tokGT2}, where $\a = \lambda + \rho$ for some weakly decreasing partition $\lambda$.

Let $S_{\infty}$ denote the symmetric group on $\mathbb{N}$ and take $\zeta \in S_{\infty}$ to be the permutation which maps $k \mapsto k+1$ for each $k \in \mathbb{N}$. Then \eqref{toks} is equivalent to
\begin{equation}\label{tokGT2}
\defWD{n}{q} \cdot s_{\lambda}(x) = \sum_{\substack{\mu \in \GTtwo(\a) \\ \mu \text{ strict}}} (-q)^{l(\a;\mu)} (1-q)^{s(\a;\mu)} \ x_1^{|\a| - |\mu|} \ \zeta \left(\defWD{n-1}{q} \cdot s_{\mu-\rho}(x)\right).
\end{equation}
Here, the function $l(\a;\mu)$ is the number of `left-leaning' parts of $\mu$ with respect to $\a$, i.e. the number of parts $\mu_i$ which satisfy $\mu_i = \a_i$. The function $z(\a;\mu)$ is similarly defined with `special' parts. 

Re-expressing \eqref{tokseq} recursively as \eqref{tokGT2} motivates the search for a generalization of Tokuyama's formula expressed as a summation over the set $\GTtwo(\a)$, as below. This theorem is equivalent to Theorem \ref{MainTheorem2*}.

\begin{restatable}{thm}{Mainthree}\label{maintheorem}
Suppose $\lambda$ is a weakly decreasing partition of length $n$, and set $\a = \lambda + \rho$. Then, using the notation defined in Section \ref{addstats}, we have
\begin{equation} \label{maintheoremeq}
\defWD{n}{q} \cdot \HL{\lambda} = \sum_{\mu \in \GTtwo(\a)} \M{\a}{\mu} x_1^{|\a| - |\mu|} \ \zeta\left(\defWD{n-1}{q}\cdot \HL{\mu-\rho}\right). \end{equation}
\end{restatable}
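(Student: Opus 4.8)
The plan is to cancel the deformed Weyl denominators common to both sides, reducing \eqref{maintheoremeq} to a single branching identity, and then to expand the left-hand side of that identity via the branching and Pieri rules for Hall--Littlewood polynomials, matching the resulting coefficients against the continuant expansion of the tridiagonal determinant $\M{\alpha}{\mu}$. First I would factor the deformed Weyl denominator according to the variable $x_1$: since $\zeta$ sends $x_k \mapsto x_{k+1}$,
\[
\defWD{n}{q} = \Big(\prod_{j=2}^{n}(x_1 - q x_j)\Big)\cdot \zeta\big(\defWD{n-1}{q}\big),
\]
so that $\zeta(\defWD{n-1}{q})$ is a common factor of both sides of \eqref{maintheoremeq}. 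Cancelling it leaves the equivalent identity
\[
\Big(\prod_{j=2}^{n}(x_1 - q x_j)\Big)\, R_{\lambda}(x_1,\dots,x_n;t) = \sum_{\mu \in \GTtwo(\alpha)} \M{\alpha}{\mu}\, x_1^{|\alpha|-|\mu|}\, R_{\mu-\rho}(x_2,\dots,x_n;t), \qquad (\star)
\]
whose right-hand side now genuinely involves only $(n-1)$-variable Hall--Littlewood polynomials. Both sides of $(\star)$ are polynomial in $x_1$ with coefficients symmetric in $x_2,\dots,x_n$, so I would prove it by expanding both sides in the basis $\{R_\nu(x_2,\dots,x_n;t)\}$ and comparing.

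To expand the left-hand side I would proceed in two stages. Separating the variable $x_1$ in the symmetrization of Definition \ref{HL} yields a branching rule $R_{\lambda}(x_1,\dots,x_n;t) = \sum_{\kappa} b_{\lambda/\kappa}(t)\, x_1^{|\lambda|-|\kappa|}\, R_{\kappa}(x_2,\dots,x_n;t)$, summed over $\kappa$ interleaving $\lambda$. Then, expanding $\prod_{j=2}^{n}(x_1-qx_j) = \sum_{k=0}^{n-1}(-q)^k e_k(x_2,\dots,x_n)\, x_1^{\,n-1-k}$ and applying the Hall--Littlewood Pieri rule to each product $e_k \cdot R_\kappa$ (converting between $R$ and the Macdonald $P=v_\lambda R$ through the stabilizer $v_\lambda(t)$) rewrites the left-hand side as a triple sum over $\kappa$, $k$, and $\nu$. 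Writing $\nu = \mu-\rho$ and tracking degrees, the exponent of $x_1$ generated on the left is $n-1-k+|\lambda|-|\kappa|$, which matches $|\alpha|-|\mu|$ exactly when $|\nu|=|\kappa|+k$, consistent with $\nu/\kappa$ being the $k$-box vertical strip supplied by the Pieri rule; thus the powers of $x_1$ align for free, and $(\star)$ reduces to the scalar identity
\[
\M{\alpha}{\mu} = \sum_{\kappa} b_{\lambda/\kappa}(t)\,(-q)^{|\nu/\kappa|}\,\big[\text{coefficient of } R_\nu \text{ in } e_{|\nu/\kappa|}R_\kappa\big],
\]
the sum ranging over intermediate $\kappa$ that interleave $\lambda$ and for which $\nu/\kappa$ is a vertical strip. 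A point requiring care is that a non-strict $\mu$ produces a non-dominant $\nu=\mu-\rho$, so $R_{\mu-\rho}$ must be read through Definition \ref{HL} and straightened (for instance $R_{(c,c+1)}=t\,R_{(c+1,c)}$); these straightened contributions must be reconciled with the dominant ones, and this is precisely what makes the $t$-dependence of $\M{\alpha}{\mu}$ at non-strict $\mu$ nontrivial.

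The main obstacle is establishing this scalar identity, that is, seeing the tridiagonal determinant emerge. The vertical-strip condition forces each $\kappa_i \in \{\nu_i-1,\nu_i\}$, a binary choice, while the interleaving constraints and the multiplicity factors appearing in $b_{\lambda/\kappa}$ and in the Pieri coefficient couple only \emph{adjacent} positions $i,i+1$, through the shared entry $\alpha_{i+1}$. A sum of products of local weights with such nearest-neighbour coupling is exactly a continuant, so I would expand $\M{\alpha}{\mu}$ by its monomer--dimer (continuant) expansion, with monomer weight $w(\mu_i)$ at position $i$ and dimer weight $-d(\alpha_{i+1})$ across $i,i+1$, and match it term-by-term with the reorganised sum over $\kappa$. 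Concretely, one must verify that summing the branching, Pieri, and $(-q)^k$ factors over the binary choice at a single position collapses to exactly $w(\mu_i)$, and that the adjacent-position corrections collapse to $d(\alpha_{i+1})$ --- in other words, that the combined weights are precisely those recorded in Table \ref{table:wd}. I would anchor this at $n=2$, where $(\star)$ reads $(x_1-qx_2)R_\lambda=\sum_{\mu_1}w(\mu_1)\,x_1^{|\alpha|-\mu_1}x_2^{\mu_1}$ and the identity collapses to $w(\mu_1)=c_{\mu_1}-q\,c_{\mu_1-1}$ for the one-variable branching coefficients $c_j$ of $R_\lambda(x_1,x_2;t)$ (directly matching each entry of Table \ref{table:wd}), and then organise the general case as an induction on $n$ driven by the three-term continuant recursion $\M{\alpha}{\mu}=w(\mu_{n-1})\,M'-d(\alpha_{n-1})\,M''$, where $M'$ and $M''$ are the determinants of the patterns obtained by deleting the last one, respectively two, bottom entries.
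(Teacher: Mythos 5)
Your reduction steps are sound: cancelling the common factor $\zeta\left(\defWD{n-1}{q}\right)$ is legitimate, the degree bookkeeping (the powers of $x_1$ align exactly when $|\nu|=|\kappa|+k$) is correct, and comparing coefficients in the Hall--Littlewood basis of $x_2,\ldots,x_n$ is a valid framework; your $n=2$ anchor is also right. The genuine gap is the central claim that the resulting sum over $\kappa$ ``couples only adjacent positions'' and can therefore be matched term-by-term with the monomer--dimer (continuant) expansion of $\M{\alpha}{\mu}$. Hall--Littlewood branching and Pieri coefficients are \emph{not} products of nearest-neighbour weights: they carry multiplicity factors --- factors of the form $(1-t^{m_i(\kappa)})$ in the branching coefficients (and in the $v$-ratios converting between $R$ and $P$), and $t$-binomial coefficients in the multiplicities $m_i(\nu)$ in the Pieri coefficients --- and these couple \emph{every} position in a run of equal parts, not just adjacent pairs. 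Such runs are unavoidable: $\lambda$ is only weakly decreasing by hypothesis, and even when $\lambda$ is strict the partitions $\nu=\mu-\rho$ can have arbitrarily long runs (e.g.\ $\mu=(c+2,c+1,c)$ gives $\nu=(c,c,c)$). For the same reason your proposed induction driven by the recursion $\M{\alpha}{\mu}=w(\mu_{n-1})M'-d(\alpha_{n-1})M''$ breaks down: deleting the last position changes run lengths and hence changes the branching/Pieri weights at \emph{interior} positions, so the conditioned sum is not the sum attached to the truncated data.

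A concrete instance shows the failure of term-by-term matching. Take $\lambda=(c,c,c)$, so $\alpha=(c+2,c+1,c)$. Interleaving forces the single choice $\kappa=(c,c)$, and for $\nu=(c+1,c)$ your scalar identity's right-hand side is the single product $(1+t+t^2)\cdot(-q)\cdot(1+t)$, where $1+t+t^2$ is exactly the run-length factor from branching out of $(c,c,c)$. On the determinant side, the coefficient of $R_{(c+1,c)}(x_2,x_3;t)$ is not one determinant but the combination
\begin{equation*}
\M{\alpha}{(c+2,c)}+t\,\M{\alpha}{(c+1,c+1)} \;=\; (-q-2qt)+t\,(-2qt-qt^{2}) \;=\; -q(1+t)(1+t+t^{2}),
\end{equation*}
the second summand being the straightened contribution of the non-strict $\mu=(c+1,c+1)$ via Proposition \ref{shift}. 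Neither determinant individually, nor any single monomer--dimer term of either, matches a term of the $\kappa$-sum (here there is only one such term but four continuant terms); only the grand totals agree, and only after the straightening terms are folded in. So the reconciliation of non-strict $\mu$, which you defer as a ``point requiring care,'' is not a side issue but a prerequisite for any factorization to appear, and the local collapse onto Table \ref{table:wd} that you propose to verify position-by-position is false as stated. The paper avoids this entirely: it never invokes branching or Pieri rules, instead deriving bespoke recursions from the symmetrization formula (Lemmas \ref{lambda12} and \ref{originalproof}), matching the $w(\mu_1)$- and $d(\alpha_2)$-contributions separately (Lemmas \ref{w} and \ref{d}), and running the induction on the number of parts through cofactor expansion of $\M{\alpha}{\mu}$ along its \emph{first} row, with all straightening phenomena deferred to the non-recursive Theorem \ref{MainTheorem2*}.
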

\noindent It is worth noticing that unlike \eqref{tokGT2}, this expression requires all partitions in $\GTtwo(\a)$, including those that are non-strict, distinguishing it from Tokuyama's formula.

Theorem \ref{maintheorem} uses a determinant $\M{\a}{\mu}$ to determine the coefficient of the expression $x_1^{|\a| - |\mu|} \ \zeta\left(\defWD{n-1}{q}\cdot \HL{\mu-\rho}\right)$ in the relevant expansion of $\defWD{n}{q} \cdot \HL{\lambda}$. We use induction on the length of $\l$ and cofactor expansion of the determinant $\M{\a}{\mu}$ to prove Theorem \ref{maintheorem}. We omit the computations of the base cases in which $\l$ has length $1$ or $2$, which are easy to verify.

We move on to the general case of some weakly decreasing partition $\lambda$ of length $n>2$. For the remainder of this proof, we fix some general notation for partitions. Firstly, the partition $\lambda$ and its length $n$ are now fixed, and in any new notation to follow, these will remain independent of the variables in the expression. We consequently fix the partition $\a = \lambda + \rho$, also of length $n$. When referring to an arbitrary partition, we use $\kappa$ of length $m$. Finally, the partition $\mu$ will consistently be used as an arbitrary element of some set of the form $\GTtwo(\kappa)$.

Given a partition $\kappa = (\kappa_1,\ldots,\kappa_m)$, we will write
\begin{equation}
\exone{\kappa} = (\kappa_2,\ldots, \kappa_m) \text{ and }\extwo{\kappa} = (\kappa_3,\ldots,\kappa_m). 
\end{equation} 
Also, we define
\begin{equation}
\defdeltaone{i}{l}{t} := \prod_{\substack{1 \leq a \leq n \\ a \neq i}} x_l - tx_a, 
\end{equation}
and similarly
\begin{equation}
\defdeltatwo{i}{j}{l}{t} := \prod_{\substack{1 \leq a \leq n \\ a \neq i,j}} x_l - tx_a, \qquad \text{and} \qquad
\defdeltathree{i}{j}{k}{l}{t} := \prod_{\substack{1 \leq a \leq n \\ a \neq i,j,k}} x_l - tx_a.
\end{equation}
As with $\WD{n}$, we define $\deltaone{i}{l}=\defdeltaone{i}{l}{1}$ and similarly $\deltatwo{i}{j}{l}=\defdeltatwo{i}{j}{l}{1}$ and $\deltathree{i}{j}{k}{l} = \defdeltathree{i}{j}{k}{l}{1}$.

Our inductive hypotheses will be
\begin{equation}\label{indhyp1}
 \HL{\exone{\l}} \cdot \defdeltatwo{1}{n}{1}{q}   = \sum_{\mu \in \GTtwo(\exone{\a})} \M{\exone{\a}}{\mu} x_1^{|\exone{\a}| - |\mu|} \zeta\left( \HL{\mu-\rho} \right),
\end{equation}
and
\begin{equation}\label{indhyp2}
\HL{\extwo{\l}} \cdot \defdeltathree{1}{n-1}{n}{1}{q}   = \sum_{\mu \in \GTtwo(\extwo{\a})} \M{\extwo{\a}}{\mu} x_1^{|\extwo{\a}| - |\mu|} \zeta\left(\HL{\mu-\rho}\right).
\end{equation}

Note that multiplying both sides of \eqref{indhyp1} by $\zeta\left(\defWD{n-1}{q}\right)$ and \eqref{indhyp2} by $\zetatwo{1}{2}\left( \defWD{n-2}{q}\right)$ gives the form seen in Theorem \ref{maintheorem}.

Prior to the proof, we require another operator: We generalize the notion of $\zeta$ to $\zeta_i \in S_{\infty}$ which is defined to take $k \mapsto k+1$ for each $k \in \mathbb{N}$ with $k \geq i$. Furthermore, we also define $\zeta_{i,j} = \zeta_{j,i} = \zeta_j \zeta_i \in S_{\infty}$ when $i < j$. We notice that these operators act on a polynomial $f(x) = f(x_1, \ldots, x_m)$ to obtain 
\begin{equation}
\zeta_i(f(x))=f(x_1,\ldots,x_{i-1},x_{i+1},\ldots,x_{m+1}),
\end{equation}
and
\begin{equation}
\zeta_{i,j} (f(x)) = \zeta_{j,i} (f(x))=f(x_1, \ldots x_{i-1} , x_{i+1} , \ldots x_{j-1}, x_{j+1}, \ldots , x_{m+2}).
\end{equation}
Notice that $\zeta_1 = \zeta$, and we shall use the latter in the proof to follow.

We begin with a series of lemmas.

%The following lemma provides two recursive identities for the polynomial $R_{\lambda}(x;t)$, which will be used throughout the proof.

%%%%%%%%%%%%%%%% LEMMA 1%%%%%%%%%%%%%%%%%%%%%
\begin{Lemma}\label{lambda12}
For an arbitrary partition $\kappa$ of length $m > 2$, we express $\HL{\kappa}$ recursively as
\begin{equation}\label{lambda1}
\HL{\kappa}=\sum_{1\leq i \leq m} x_i^{\kappa_1} \left(\defProdone{i}{i} \right) \zeta_i\left(\HL{\exone{\kappa}}\right),
\end{equation}
and 
\begin{equation}\label{lambda2}
\HL{\kappa}=\sum_{1\leq i \leq m} \sum_{\substack{1 \leq j \leq m \\ j \neq i}} x_i^{\kappa_1}x_j^{\kappa_2} \left( \defProdone{i}{i}  \defProdtwo{i}{j}{j} \right) \zeta_{i,j} \left(\HL{\extwo{\kappa}}\right).
\end{equation}
\end{Lemma}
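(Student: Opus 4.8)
The plan is to prove both recursions by iterating the defining formula for $\HL{\kappa}$ in Definition \ref{HL}, essentially performing a Laplace-type expansion of the symmetrization over $S_m$ according to where the largest variable goes.

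For \eqref{lambda1}, the key observation is that the sum over $\sigma \in S_m$ in the definition of $\HL{\kappa}$ can be organized by the value $i = \sigma(1)$, i.e.\ which variable $x_i$ receives the exponent $\kappa_1$. First I would factor the expression $x^{\kappa}\,\defWD{m}{t}/\WD{m}$ by isolating the first coordinate: the numerator $\defWD{m}{t} = \prod_{1\le a<b\le m}(x_a - t x_b)$ contains exactly the factors $\prod_{a=2}^m (x_1 - t x_a)$ involving the first slot, and likewise for $\WD{m}$. After applying $\sigma$ with $\sigma(1)=i$, these factors become $\prod_{a\ne i}(x_i - t x_a) = \defdeltaone{i}{i}{t}$ over $\WD{m}$'s analogous factor $\deltaone{i}{i}$, which is precisely the quotient $\defProdone{i}{i}$. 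The remaining sum over permutations fixing the image of $1$ is exactly a symmetrization of the reduced partition $\exone{\kappa}$ over the remaining variables $\{x_a : a\ne i\}$, which is $\zeta_i(\HL{\exone{\kappa}})$ since $\zeta_i$ relabels $(x_1,\dots,x_{m-1}) \mapsto (x_1,\dots,\widehat{x_i},\dots,x_m)$. Collecting the factor $x_i^{\kappa_1}$ gives \eqref{lambda1}.

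Equation \eqref{lambda2} follows by the same principle applied twice: I would either iterate \eqref{lambda1} once more on the inner $\HL{\exone{\kappa}}$ and track how $\zeta_i$ composes with the second-stage $\zeta$, or equivalently split the $S_m$-symmetrization by the ordered pair $(i,j) = (\sigma(1),\sigma(2))$ in one step. The bookkeeping here is checking that $\zeta_i$ applied to the factor $\defProdone{j'}{j'}$ coming from the inner recursion correctly produces the two-variable product $\defProdone{i}{i}\defProdtwo{i}{j}{j}$, and that the composite relabeling is $\zeta_{i,j}$ acting on $\HL{\extwo{\kappa}}$. One must verify the index shift carefully: after removing slot $i$, the index $j$ in the reduced variable set corresponds to a shifted index, and the definition $\zeta_{i,j} = \zeta_j \zeta_i$ for $i<j$ is designed exactly so that this composition lands correctly.

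The main obstacle I anticipate is the index bookkeeping in the denominator factors and the $\zeta_i$ relabelings, particularly ensuring the deformed Weyl-denominator quotients split cleanly. The subtle point is that $\defWD{m}{t}/\WD{m}$ is \emph{not} symmetric, so I cannot naively pull it outside the symmetrization; I must track precisely which factors get permuted when $\sigma(1)=i$ and confirm that the cross-terms not involving the first slot reassemble into $\defWD{m-1}{t}/\WD{m-1}$ in the reduced variables (absorbed into the inner $\HL{\exone{\kappa}}$ after applying $\zeta_i$). Checking this factorization of $\defWD{m}{t}/\WD{m}$ into the slot-$i$ quotient $\defProdone{i}{i}$ times a reduced Weyl quotient is the crux, and it is where the $t$-deformation must be handled with care rather than inherited directly from the classical Schur case.
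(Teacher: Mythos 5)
Your proposal is correct and takes essentially the same approach as the paper's proof. The paper organizes the $S_m$-symmetrization into cosets $\psi_i H$ of the stabilizer $H$ of the first index (which is exactly your grouping by $i=\sigma(1)$), factors $x^{\kappa}\,\defWD{m}{t}/\WD{m}$ as $x_1^{\kappa_1}\defProdone{1}{1}$ times the reduced Weyl quotient before symmetrizing over $H$, and then obtains \eqref{lambda2} by applying \eqref{lambda1} to the inner $\HL{\exone{\kappa}}$ --- precisely the plan you describe.
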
  
\begin{proof}
Let the permutation $\psi_i = (i\;i-1\;\cdots\;1)$, and let $H$ be the symmetric group acting on the $(n-1)$ indices $(2,3,\ldots,n)$. Then%$H = \zeta S_{m-1} \zeta^{-1}$ be the symmetric group acting on the $(m-1)$ indices $(2,3,\ldots,m)$. We note that the disjoint union of left cosets $\bigsqcup_{1 \leq i \leq m} \psi_i \cdot H$ is just $S_m$, so by Definition \ref{HL}, we find
\begin{align*}
\HL{\kappa}
& = \sum_{1 \leq i \leq m} \sum_{\sigma \in H} \psi_i \ \sigma \left(\frac{x^{{\kappa}} \defWD{m}{t}}{\WD{m}}\right)
= \sum_{1 \leq i \leq m} \sum_{\sigma \in H} \psi_i \ \sigma \left(x_1^{\kappa_1}\defProdone{1}{1} \;  \zeta\left(\frac{x^{\exone{\kappa}} \defWD{m-1}{t}}{\WD{m-1}}\right)\right).
\end{align*}
Because $\sigma \in H$ does not permute $x_1$, we have
\begin{align*}
\HL{\kappa} 
& =\sum_{1 \leq i \leq m} \psi_i \left(x_1^{\kappa_1}\defProdone{1}{1} \; \zeta\left(\HL{\exone{\kappa}}\right)\right) = \sum_{1\leq i \leq m} x_i^{\kappa_1} \left(\defProdone{i}{i} \right) \zeta_i\left(\HL{\exone{\kappa}}\right).
\end{align*}
We further obtain \eqref{lambda2} by applying \eqref{lambda1} to the $\HL{\exone{\kappa}}$ in \eqref{lambda1}. %more specificity is worth slightly more space?
\end{proof}
%%%%%%%%%%%%%%END LEMMA 1%%%%%%%%%%%%%%%%%%%%%%

%The next lemma is obtained from an algebraic manipulation of the polynomial $R_{\lambda}(x;t)$ to present it in a convenient form to use the inductive hypotheses.

\begin{Lemma}\label{originalproof}
Let $O_i= \big( (x_i - tx_1) x_i^{\lambda_1} - (x_1 -tx_i)x_1^{\lambda_1-\lambda_2}x_i^{\lambda_2} \big) / (x_i-tx_1)$. Then we have 
\begin{multline}\label{originalproofeq}
\HL{\lambda} = \sum_{2 \leq i \leq n} O_i \left( \defProdone{i}{i}\right)
\zeta_i\left(\HL{\exone{\lambda}}\right) \\ 
 -x_1^{\lambda_1-\lambda_2} \sum_{2 \leq i \leq n}\sum_{\substack{ 2 \leq j \leq n \\ j \neq i}}
tx_i^{\lambda_2}  x_j^{\lambda_2} \defProdtwo{1}{i}{i}\defProdthree{1}{i}{j}{j} \zeta_{i,j}\left(\HL{\extwo{\lambda}}\right).
\end{multline}

\end{Lemma}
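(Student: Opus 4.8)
The plan is to start from the first recursion \eqref{lambda1} of Lemma \ref{lambda12} and reorganize it into the shape of \eqref{originalproofeq}. Expanding the definition of $O_i$ gives
\[
O_i = x_i^{\lambda_1} - \frac{x_1-tx_i}{x_i-tx_1}\,x_1^{\lambda_1-\lambda_2}x_i^{\lambda_2},
\]
so the first sum on the right of \eqref{originalproofeq} reproduces exactly the $i\geq 2$ terms of \eqref{lambda1} minus a correction sum. Subtracting \eqref{lambda1} from \eqref{originalproofeq}, the lemma becomes equivalent to the single identity asserting that the leftover $i=1$ summand of \eqref{lambda1}, namely $x_1^{\lambda_1}\defProdone{1}{1}\zeta(\HL{\exone{\lambda}})$, together with the $O_i$ correction sum, equals the negative of the double sum in \eqref{originalproofeq}. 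Since the factor $\frac{x_1-tx_i}{x_i-tx_1}$ equals $1$ when $i=1$, these two pieces combine into the single expression
\[
x_1^{\lambda_1-\lambda_2}\sum_{1 \leq i \leq n} \frac{x_1-tx_i}{x_i-tx_1}\,x_i^{\lambda_2}\,\defProdone{i}{i}\,\zeta_i\!\left(\HL{\exone{\lambda}}\right),
\]
and after cancelling the common factor $x_1^{\lambda_1-\lambda_2}$ the whole lemma reduces to showing this sum equals $-x_1^{-(\lambda_1-\lambda_2)}$ times the double sum. Note the reduced identity no longer involves $\lambda_1$, only $\lambda_2$, so I may work with it in isolation.

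To establish the reduced identity I would expand each $\zeta_i(\HL{\exone{\lambda}})$ by applying \eqref{lambda1} a second time to $\HL{\exone{\lambda}}$, a Hall--Littlewood polynomial of length $n-1$ with leading part $\lambda_2$; this rewrites everything in terms of $\zeta_{i,j}(\HL{\extwo{\lambda}})$. The key bookkeeping step is to track the $\zeta$-operators: one checks that composing the outer $\zeta_i$ with the inner shift yields exactly $\zeta_{i,j}$, where $j$ is the image of the inner summation index, and that the attached products of linear factors reindex as $\zeta_i(\delta') = \delta_{i,j}$ and likewise for the $i=1$ case. After this expansion every term carries an operator $\zeta_{i,j}(\HL{\extwo{\lambda}})$ labelled by an unordered pair of \emph{frozen} positions.

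I then organize the resulting terms by this pair of frozen positions. The terms in which one frozen position equals $1$ arise in two ways --- from the outer index $i=1$, and from an outer index $i\geq 2$ whose inner expansion freezes position $1$ --- and a short computation using $\defProdone{i}{i} = \frac{x_i-tx_1}{x_i-x_1}\,\defProdtwo{1}{i}{i}$ shows these two contributions are exact negatives of one another and cancel; this is consistent with the double sum, which involves only indices $\geq 2$. For the terms in which both frozen positions $i,j$ are $\geq 2$, I would peel the factor $(x_\bullet - tx_1)$ off each $\delta$-quotient to expose the common symmetric factor $\defProdthree{1}{i}{j}{i}\,\defProdthree{1}{i}{j}{j}$, group the ordered pairs $(i,j)$ and $(j,i)$ (which share the operator $\zeta_{i,j}=\zeta_{j,i}$), and reduce the matching to the rational identity
\[
\frac{x_1-tx_i}{x_i-x_1}\cdot\frac{x_j-tx_1}{x_j-x_1}\cdot\frac{x_i-tx_j}{x_i-x_j}\;+\;(i\leftrightarrow j)\;=\;-t(1+t),
\]
which is precisely $-1$ times the symmetrization $t\left(\frac{x_i-tx_j}{x_i-x_j}+\frac{x_j-tx_i}{x_j-x_i}\right)=t(1+t)$ of the factor governing the coefficient $t$ in the double sum. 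This is exactly where the $-t$ of \eqref{originalproofeq} is produced.

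The main obstacle I expect is twofold, and both parts are bookkeeping-heavy rather than conceptually deep. First, I must correctly compose and reindex the $\zeta$-operators and the $\delta$-products across the two nested applications of \eqref{lambda1}, so that the frozen-position labels and the excluded indices in the $\delta$-subscripts come out right. Second, I must verify the symmetric rational identity displayed above, which can be checked by clearing denominators (or by evaluating at $x_1=0$ and using homogeneity) but is the delicate heart of the argument. Neither step is difficult in isolation; the real risk is an off-by-one or sign slip in reconciling the length-$n$, length-$(n-1)$, and length-$(n-2)$ index conventions.
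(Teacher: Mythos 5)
Your proposal is correct, and each of its delicate points checks out: the reduction step is valid (the factor $\frac{x_1-tx_i}{x_i-tx_1}$ equals $1$ at $i=1$, so the leftover $i=1$ term of \eqref{lambda1} absorbs into the correction sum); the operator compositions are $\zeta_i\zeta_k=\zeta_{k,i}$ for $k<i$ and $\zeta_i\zeta_k=\zeta_{i,k+1}$ for $k\geq i$, so the double expansion does land on terms $\frac{x_1-tx_i}{x_i-tx_1}\,x_i^{\lambda_2}x_j^{\lambda_2}\defProdone{i}{i}\defProdtwo{i}{j}{j}\,\zeta_{i,j}\!\left(\HL{\extwo{\lambda}}\right)$ indexed by ordered pairs; the pairs containing position $1$ cancel exactly as you say; and your symmetric rational identity is true, since after clearing denominators the combined numerator equals $t(1+t)(x_1-x_i)(x_i-x_j)(x_j-x_1)$, which is $-t(1+t)$ times the common denominator $(x_i-x_1)(x_j-x_1)(x_i-x_j)$.

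Your organization is genuinely different from the paper's, even though both proofs ultimately rest on the same two sub-identities: your boundary cancellation is precisely the paper's equation \eqref{Weird2} (proved there by the same expansion via \eqref{lambda1}), and your interior matching is equivalent to the paper's equation \eqref{last}. The difference is in how \eqref{last} is reached. The paper runs the argument in the constructive direction: it begins with the antisymmetrized vanishing sum
\begin{equation*}
0=\sum_{2 \leq i \leq n} \sum_{2\leq j \leq n} (-1)^{i+j}x_i^{\lambda_2}  x_j^{\lambda_2}(x_i-x_j) \;  \zetatwo{i}{j} \left( \WD{n-2} \cdot \HL{\extwo{\lambda}} \right)
\defdeltatwo{1}{i}{i}{t} \defdeltatwo{1}{j}{j}{t},
\end{equation*}
divides by $\WD{n}$, and splits each summand with the asymmetric identity
\begin{equation*}
\frac{x_1(1-t)(x_j-tx_i)}{(x_j-x_1)(x_i-tx_1)} = \frac{(x_1-tx_i)(x_j-tx_1)}{(x_i-tx_1)(x_j-x_1)} + t\frac{x_i-x_1}{x_i-tx_1},
\end{equation*}
then assembles the lemma by adding \eqref{Weird2} and applying \eqref{lambda1} once more; it never expands $\HL{\lambda}$ into the $\zeta_{i,j}$ basis nor symmetrizes over $(i,j)\leftrightarrow(j,i)$. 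Your constant-valued identity is exactly what one obtains by symmetrizing the paper's splitting identity and invoking the antisymmetry that makes the paper's starting sum vanish, so the two mechanisms carry the same content. What your route buys is that it is a pure verification — reduce the lemma to a $\lambda_1$-free identity, expand both sides in the $\zeta_{i,j}$ basis, and match coefficients pair by pair — with no need to conjure the vanishing sum out of thin air; the cost, as you anticipate, is the heavier $\zeta$/$\delta$ reindexing, which the paper largely sidesteps by only ever adding single sums to double sums.
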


\begin{proof}
We begin by observing that
\begin{equation*}
0=\sum_{2 \leq i \leq n} \sum_{2\leq j \leq n} (-1)^{i+j}x_i^{\lambda_2}  x_j^{\lambda_2}(x_i-x_j) \;  \zetatwo{i}{j} \left( \WD{n-2} \cdot \HL{\extwo{\lambda}} \right)
\defdeltatwo{1}{i}{i}{t} \defdeltatwo{1}{j}{j}{t}.
\end{equation*}
This can easily be seen by swapping the subscripts $i$ and $j$ in the right hand side, revealing RHS $= -$RHS.

We divide through the equality above by $\WD{n}$, altering the products and the bounds of the summation, and multiply by $x_1(1-t)$ to find 
\begin{equation*}
0 = \sum_{2 \leq i \leq n}\sum_{\substack{ 2 \leq j \leq n \\ j \neq i}} \frac{ x_1 (1-t)(x_j - tx_i)   x_i^{\lambda_2}  x_j^{\lambda_2}}{(x_j-x_1)(x_i-tx_1)}  \zeta_{i,j}(\HL{\extwo{\lambda}})
 \defProdone{i}{i} \defProdthree{1}{i}{j}{j} . 
\end{equation*}
Using the identity \begin{equation*}
\frac{x_1(1-t)(x_j-tx_i)}{(x_j-x_1)(x_i-tx_1)} = \frac{(x_1-tx_i)(x_j-tx_1)}{(x_i-tx_1)(x_j-x_1)} + t\frac{x_i-x_1}{x_i-tx_1},
\end{equation*}
we break the double summation into two parts; in particular, if we take
\begin{equation}\label{L}
L = \sum_{2 \leq i \leq n}\sum_{\substack{ 2 \leq j \leq n \\ j \neq i}}
tx_i^{\lambda_2}  x_j^{\lambda_2} \defProdtwo{1}{i}{i}\defProdthree{1}{i}{j}{j} \zeta_{i,j}\left(\HL{\extwo{\lambda}}\right),
\end{equation}
then
\begin{equation} \label{last}
0 = 
L + \sum_{2 \leq i \leq n}\sum_{\substack{ 2 \leq j \leq n \\ j \neq i}} 
\frac{(x_1-tx_i)x_i^{\lambda_2}  x_j^{\lambda_2}}{(x_i-tx_1)}\zeta_{i,j}(\HL{\extwo{\lambda}})
 \defProdone{i}{i} \defProdtwo{i}{j}{j}.
  \end{equation}
Now, one can see that
\begin{equation}\label{Weird2}
-x_1^{\lambda_2} \defProdone{1}{1} \zeta\left(\HL{\exone{\lambda}}\right) 
= x_1^{\lambda_2} \sum_{2 \leq i \leq n} \defProdone{i}{i} \defProdtwo{1}{i}{1} 
\frac{(x_1-tx_i)  x_i^{\lambda_2}}{(x_i-tx_1)}\zeta_{1,i}\left(\HL{\extwo{\lambda}}\right)
\end{equation}
holds by expressing $\HL{\exone{\lambda}}$ of the left hand side explicitly using \eqref{lambda1} and rearranging the result.  

We notice that the right hand side of \eqref{Weird2} is equivalent to setting $j=1$ in the double summation of \eqref{last}. Thus, adding either side of \eqref{Weird2} to either side of \eqref{last} respectively, we have
\begin{equation*}
 - x_1^{\lambda_2} \defProdone{1}{1}  \zeta\left(\HL{\exone{\lambda}}\right) 
= L + \sum_{2 \leq i \leq n}\sum_{\substack{ 1 \leq j \leq n \\ j \neq i}} 
 \frac{(x_1-tx_i)x_i^{\lambda_2}  x_j^{\lambda_2}}{(x_i-tx_1)}\zeta_{i,j}(\HL{\extwo{\lambda}})
 \defProdone{i}{i} \defProdtwo{i}{j}{j} . 
\end{equation*}
Multiplying through by  $-x_1^{\lambda_1-\lambda_2}$, and adding
\[\displaystyle \sum_{2 \leq i \leq n} x_i^{\lambda_1} \defProdtwo{i}{j}{j} \zeta_i(\HL{\exone{\lambda}}),\] to both sides, we may apply \eqref{lambda1} once on either side of the equality to obtain
\begin{equation*}
\HL{\lambda} =
 -x_1^{\lambda_1-\lambda_2} L + \sum_{2 \leq i \leq n} \left( \defProdone{i}{i}  \right) \left( x_i^{\lambda_1} - x_1^{\lambda_1-\lambda_2}x_i^{\lambda_2}\frac{x_1-tx_i}{x_i-tx_1} \right) 
\zeta_{i}(\HL{\exone{\lambda}}).
\end{equation*}
Recalling $O_i$ as
\begin{equation*}
O_i = \frac{(x_i - tx_1) x_i^{\lambda_1} -(x_1 -tx_i)x_1^{\lambda_1-\lambda_2}x_i^{\lambda_2} }{x_i-tx_1}
= x_i^{\lambda_1} - x_1^{\lambda_1-\lambda_2}x_i^{\lambda_2}\frac{x_1-tx_i}{x_i-tx_1} ,
\end{equation*}
we combine the two summations on the right hand side and recall $L$ from \eqref{L} to write
\begin{multline*}
\HL{\lambda} = \sum_{2 \leq i \leq n} O_i \left( \defProdone{i}{i}\right)
\zeta_i\left(\HL{\exone{\lambda}}\right) \\ 
 -x_1^{\lambda_1-\lambda_2} \sum_{2 \leq i \leq n}\sum_{\substack{ 2 \leq j \leq n \\ j \neq i}}
tx_i^{\lambda_2}  x_j^{\lambda_2} \defProdtwo{1}{i}{i}\defProdthree{1}{i}{j}{j} \zeta_{i,j}\left(\HL{\extwo{\lambda}}\right).
\end{multline*}\qedhere
\end{proof}

We introduce two related functions that will be used in the upcoming lemmas. For some non-negative integers $u$ and $v$, we define
\begin{equation}
F_{\exone{\lambda}}(u) : = \sum_{\mu \in \GTtwo(\exone{\a})} \M{\exone{\a}}{\mu} x_1^{|\exone{\a}| - |\mu|} \zeta\left(\HL{\conc{(u)}{\mu-\rho}}\right),
\end{equation} 
and
\begin{equation}
F_{\extwo{\lambda}}(u,v) : = \sum_{\mu \in \GTtwo(\extwo{\a})} \M{\extwo{\a}}{\mu} x_1^{|\extwo{\a}| - |\mu|} \zeta\left(\HL{\conc{(u,v)}{\mu-\rho}}\right).
\end{equation} 

\begin{Lemma}\label{GT12} 
Suppose $u$ and $v$ are some non-negative integers. Then, assuming the inductive hypotheses in \eqref{indhyp1} and \eqref{indhyp2}, we have
\begin{equation}\label{GT1}
F_{\exone{\lambda}}(u) = \sum_{2 \leq i \leq n} x_i^{u} \defProdtwo{1}{i}{i} \zeta_{i}\left(\HL{\exone{\lambda}}\right) \defdeltatwo{1}{i}{1}{q},
\end{equation}
and
\begin{equation}\label{GT2}
F_{\extwo{\lambda}}(u,v) = \sum_{2\leq i \leq n} \sum_{\substack{2 \leq j \leq n \\ j \neq i}} x_i^{u}x_j^{v} 
\defProdtwo{1}{i}{i}\defProdthree{1}{i}{j}{j} \zeta_{i,j}\left(\HL{\extwo{\lambda}}\right)  \defdeltathree{1}{i}{j}{1}{q},
\end{equation}
%where $h_i(\kappa)$ and $h_{i,j}(\kappa)$ are defined in \eqref{hi} and \eqref{hij}.
\end{Lemma}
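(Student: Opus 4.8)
The plan is to establish \eqref{GT1} and \eqref{GT2} in parallel, since both follow the same template: starting from the definitions of $F_{\exone{\lambda}}(u)$ and $F_{\extwo{\lambda}}(u,v)$, I expand the inner Hall--Littlewood polynomial $\HL{\conc{(u)}{\mu-\rho}}$ (resp. $\HL{\conc{(u,v)}{\mu-\rho}}$) via Lemma \ref{lambda12}, commute the outer operator $\zeta$ past the resulting index-shift operators and re-index, interchange the finite $i$-sum (resp. $i,j$-sum) with the sum over $\mu$, and finally collapse the inner $\mu$-sum using the inductive hypothesis \eqref{indhyp1} (resp. \eqref{indhyp2}).

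In detail, for \eqref{GT1} I would apply \eqref{lambda1} to $\HL{\conc{(u)}{\mu-\rho}}$, whose leading part is $u$ and whose remaining parts are $\mu-\rho$, obtaining $\sum_{i} x_i^u (\cdots)\,\zeta_i(\HL{\mu-\rho})$ in $n-1$ variables. Applying $\zeta$ and using the operator identity $\zeta\zeta_i=\zeta_{i+1}\zeta$ (verified directly from the action $x_k\mapsto x_{k+1}$), then re-indexing $i\mapsto i+1$ so that the index ranges over $2\le i\le n$, converts the $(n-1)$-variable rational prefactors into the $n$-variable products $\defProdtwo{1}{i}{i}$. After interchanging the $i$-sum with the $\mu$-sum, the operator $\zeta_i$ with $i\ge 2$ fixes $x_1$ and acts trivially on the $x$-free coefficient $\M{\exone{\a}}{\mu}$, so it factors out of the $\mu$-sum; what remains inside is exactly the sum in \eqref{indhyp1}, which equals $\HL{\exone{\l}}\cdot\defdeltatwo{1}{n}{1}{q}$. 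Distributing $\zeta_i$ across this product and using the direct check $\zeta_i\big(\defdeltatwo{1}{n}{1}{q}\big)=\defdeltatwo{1}{i}{1}{q}$ then yields \eqref{GT1}.

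The derivation of \eqref{GT2} is structurally identical, now using the two-step expansion \eqref{lambda2}, the commutation identity $\zeta\zeta_{i,j}=\zeta_{i+1,j+1}\zeta$, and the inductive hypothesis \eqref{indhyp2}. Here I re-index both $i\mapsto i+1$ and $j\mapsto j+1$, the prefactors become $\defProdtwo{1}{i}{i}\defProdthree{1}{i}{j}{j}$, the operator $\zeta_{i,j}$ (with $i,j\ge 2$, hence fixing $x_1$ and the coefficient $\M{\extwo{\a}}{\mu}$) factors out of the $\mu$-sum, and the inner sum collapses to $\HL{\extwo{\l}}\cdot\defdeltathree{1}{n-1}{n}{1}{q}$. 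The final check is that $\zeta_{i,j}$ sends $\defdeltathree{1}{n-1}{n}{1}{q}$ to $\defdeltathree{1}{i}{j}{1}{q}$, giving \eqref{GT2}.

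The main obstacle is purely the index bookkeeping concentrated in the commutation-and-reindexing step: one must verify the permutation identities $\zeta\zeta_i=\zeta_{i+1}\zeta$ and $\zeta\zeta_{i,j}=\zeta_{i+1,j+1}\zeta$, confirm that $\zeta$ carries each $(n-1)$-variable product (after the shift $i\mapsto i+1$, and $j\mapsto j+1$) to its correctly ``hatted'' $n$-variable counterpart, and confirm that the shifted operators fix $x_1$ so that the $x_1^{|\cdot|-|\mu|}$ prefactors and the $x$-independent determinants $\M{\cdot}{\mu}$ pass unchanged through the interchange of summations. Once these shifts are tracked correctly, the rest is direct substitution into Lemma \ref{lambda12} and the inductive hypotheses.
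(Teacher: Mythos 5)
Your proposal is correct and follows essentially the same route as the paper: expand $\HL{\conc{(u)}{\mu-\rho}}$ via \eqref{lambda1} (resp.\ \eqref{lambda2}), push $\zeta$ through via $\zeta\zeta_i = \zeta_{i+1}\zeta$, re-index, interchange the sums so that $\zeta_i$ applies to the entire $\mu$-sum, and collapse that sum with \eqref{indhyp1} (resp.\ \eqref{indhyp2}), finishing with the check $\zeta_i\left(\defdeltatwo{1}{n}{1}{q}\right) = \defdeltatwo{1}{i}{1}{q}$. The only difference is presentational: you make explicit the operator-commutation and re-indexing bookkeeping that the paper compresses into ``we rearrange this expression.''
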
  

\begin{proof}
The proof of \eqref{GT2} is almost identical to that of \eqref{GT1}, apart from using \eqref{lambda2} and \eqref{indhyp2} in place of \eqref{lambda1} and \eqref{indhyp1}. For brevity, we only present the detailed proof of \eqref{GT1}.

By applying \eqref{lambda1} to write $\HL{ \conc{(u)}{(\mu-\rho)}}$ in terms of $u$ and $\HL{\mu-\rho}$, the left hand side of \eqref{GT1} becomes
\begin{equation*}
\sum_{\mu \in \GTtwo(\exone{\a})} \M{\exone{\a}}{\mu} x_1^{|\exone{\a}| - |\mu|} \; \zeta\left(\sum_{1\leq i \leq n-1} x_i^{u} \defProdtwo{i}{n}{i} \zeta_i\left(\HL{\mu-\rho}\right)\right).
\end{equation*}
We rearrange this expression to write this as
\begin{equation*}
\sum_{2 \leq i \leq n} x_i^{u} \defProdtwo{1}{i}{i} \zeta_i \left( \sum_{\mu \in \GTtwo(\exone{\a})} \M{\exone{\a}}{\mu} x_1^{|\exone{\a}| - |\mu|} \zeta\left(\HL{\mu-\rho}\right) \right).
\end{equation*}
Finally, we replace the argument of $\zeta_i$ using the inductive hypothesis in \eqref{indhyp1} to give the desired result.
\end{proof}

\begin{Lemma}\label{w}
Recall $O_i= \big( (x_i - tx_1) x_i^{\lambda_1} - (x_1 -tx_i)x_1^{\lambda_1-\lambda_2}x_i^{\lambda_2} \big) / (x_i-tx_1)$ from Lemma \ref{originalproof}. Then we have 
\begin{equation} 
\sum_{\mu \in \GTtwo(\a)} w(\mu_1)\M{\exone{\a}}{\exone{\mu}} x_1^{|\a| - |\mu|} \zeta\left(\HL{\mu-\rho}\right) 
 =    \sum_{2 \leq i \leq n} 
 O_i  \left( \defProdone{i}{i} \right)
 \zeta_i\left(\HL{\exone{\l}}\right) \defdeltaone{1}{1}{q}.
\end{equation}
\end{Lemma}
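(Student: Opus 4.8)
The plan is to reduce both sides to a sum indexed by $i \in \{2,\dots,n\}$ and then verify a single rational-function identity for each $i$.

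First I would exploit that the determinant $\M{\exone{\a}}{\exone{\mu}}$ depends only on $\exone{\mu} = (\mu_2,\dots,\mu_{n-1})$ and not on $\mu_1$. The membership $\mu \in \GTtwo(\a)$ splits into the constraint $\a_1 \geq \mu_1 \geq \a_2$ on $\mu_1$ and, independently, the constraint $\exone{\mu} \in \GTtwo(\exone{\a})$ on the remaining parts, so the sum over $\mu \in \GTtwo(\a)$ factors as a double sum over $\mu_1$ and $\exone{\mu}$. Writing $\rho = \rho_{n-1}$, the decomposition $\mu - \rho = \conc{(\mu_1 - (n-2))}{(\exone{\mu} - \rho_{n-2})}$ together with the splitting $x_1^{|\a| - |\mu|} = x_1^{\a_1 - \mu_1}\, x_1^{|\exone{\a}| - |\exone{\mu}|}$ lets me recognize the inner sum over $\exone{\mu}$ as $F_{\exone{\lambda}}(u)$ with $u = \mu_1 - (n-2)$. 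This rewrites the left-hand side as $\sum_{\mu_1 = \a_2}^{\a_1} w(\mu_1)\, x_1^{\a_1 - \mu_1}\, F_{\exone{\lambda}}(\mu_1 - (n-2))$.

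Next I would apply Lemma \ref{GT12}, equation \eqref{GT1}, to expand each $F_{\exone{\lambda}}(u)$ and interchange the order of summation so that $i$ becomes the outer index. On the right-hand side I would use the factorizations $\defProdone{i}{i} = \defProdtwo{1}{i}{i}\cdot \frac{x_i - t x_1}{x_i - x_1}$ and $\defdeltaone{1}{1}{q} = \defdeltatwo{1}{i}{1}{q}\cdot (x_1 - q x_i)$ to bring it into the same shape. Both sides are then presented as explicit $i$-indexed sums of the common building block $\defProdtwo{1}{i}{i}\, \zeta_i(\HL{\exone{\l}})\, \defdeltatwo{1}{i}{1}{q}$, so it suffices to match the scalar coefficients termwise; that is, to prove for each $i$ the identity
\[
\sum_{\mu_1 = \a_2}^{\a_1} w(\mu_1)\, x_1^{\a_1 - \mu_1}\, x_i^{\mu_1 - n + 2} \;=\; O_i \cdot \frac{(x_i - t x_1)(x_1 - q x_i)}{x_i - x_1}.
\]

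The main work, and the step I expect to be the principal obstacle, is verifying this last identity. I would substitute $x_i = x_1 r$, so that both sides factor as $x_1^{\lambda_1 + 1}$ times a polynomial in $r$; the left side becomes $x_1^{\lambda_1+1}\sum_{m=\lambda_2}^{\lambda_1+1} w\, r^{m}$, where by Table \ref{table:wd} the five relevant values of $w(\mu_1)$ occur at the top index ($p_l = l$), the next index ($p_l = al$), the generic interior ($(s,s)$), and the bottom two indices ($(s,ar)$ and $(s,r)$). After clearing the denominator $x_i - x_1 = x_1(r-1)$, the claim becomes the polynomial identity $(r-1)\sum_m w\, r^m = (1 - qr)\big[(r - t)\,r^{\lambda_1} - (1 - t r)\,r^{\lambda_2}\big]$, which I would check by telescoping the interior geometric sum and comparing the coefficient of each power of $r$. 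The subtlety is that when $\a_1 - \a_2$ is small (i.e. $\lambda_1 - \lambda_2 \in \{0,1\}$) the classification of $\mu_1$ degenerates and boundary entries merge; however, the merged rows of Table \ref{table:wd} (namely $(l,ar)$ and $(al,r)$ when $\lambda_1 = \lambda_2$, and $(al,ar)$ when $\lambda_1 = \lambda_2 + 1$) produce exactly the coefficients predicted by the generic closed form, so the single polynomial identity covers all cases $\lambda_1 \geq \lambda_2$ uniformly.
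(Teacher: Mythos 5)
Your proposal is correct and takes essentially the same route as the paper: the paper likewise reduces the lemma, via Lemma \ref{GT12}, to exactly your per-$i$ scalar identity, stated there as $(x_i - tx_1)O_i/(x_i-x_1) = Q_i/(x_1 - qx_i)$ where $Q_i = \sum_{\mu_1} w(\mu_1)\,x_1^{\a_1 - \mu_1} x_i^{\mu_1 - n + 2}$, and verifies it by the same case analysis on $\lambda_1 - \lambda_2 \in \{0, 1, \geq 2\}$. The only difference is organizational: the paper manipulates the right-hand side first (substituting $Q_i$, expanding, and recognizing the coefficients as $c + g(p_l) + g(p_r) = w$), whereas you factor the left-hand side through $F_{\exone{\lambda}}$ and isolate the coefficient-matching step explicitly.
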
 
\begin{proof}
First, notice that 
$(x_i-tx_1)O_i/(x_i-x_1) = Q_i/({x_1-qx_i})$,  where
\begin{align*}
Q_i & = -q x_i^{\lambda_1+1}     +tx_1 x_i^{\lambda_1}  -qtx_1^{\lambda_1-\lambda_2}   x_i^{\lambda_2+1}  \\ & \quad +x_1^{\lambda_1-\lambda_2+1}  x_i^{\lambda_2} +\sum_{\lambda_2 < i \leq \lambda_1}(1-q)(1-t)x_1^{\lambda_1+1-i}x_i^{i}.
\end{align*}
This can be shown through simple algebraic manipulation, considering three cases for $\lambda$, namely $(1)$: $\lambda_1 = \lambda_2$, $(2)$: $\lambda_1 = 1 + \lambda_2$ and $(3)$: $\lambda_1 > 1 + \lambda_2$. 

Substituting the claim in the right hand side of the lemma gives
\begin{equation*}
\text{RHS}  = \sum_{2 \leq i \leq n} \defProdtwo{1}{i}{i} \zeta_i\left(\HL{\exone{\lambda}}\right) Q_i \  \defdeltatwo{1}{i}{1}{q} .
\end{equation*}
Then, expanding $Q_i$ and applying \eqref{GT1}, we have
\begin{align*}
\text{RHS} &= -q \cdot F_{\exone{\lambda}}(\lambda_1+1) 
 + tx_1\cdot F_{\exone{\lambda}}(\lambda_1) 
 -qtx_1^{\lambda_1-\lambda_2} \cdot F_{\exone{\lambda}}(\lambda_2+1) \\
 & \qquad  +x_1^{\lambda_1-\lambda_2+1}\cdot F_{\exone{\lambda}}(\lambda_2)
 + \sum_{\lambda_2 < i \leq \lambda_1} (1-q)(1-t)x_1^{\lambda_1+1-i}\cdot F_{\exone{\lambda}}(i). 
\end{align*}
Examining Definition \ref{c,d}, we see that the first two coefficients above are precisely the nonzero possibilities of $g(p_l(\mu_1))$; the next two are precisely the nonzero possibilities of $g(p_r(\mu_1))$; and the final summation is over all the nonzero possibilities of $c(\mu_1)$. Recalling from Definition \ref{d,w} that $w(\mu_1)=c(\mu_1)+g(p_l(\mu_1))+g(p_r(\mu_1))$, we simply have
\begin{equation} 
\text{RHS } = \sum_{\mu \in \GTtwo(\a)} w(\mu_1)\M{\exone{\a}}{\exone{\mu}} x_1^{|\a| - |\mu|} \; \zeta\left(\HL{\mu - \rho}\right). \qedhere 
\end{equation} 
\end{proof}

\begin{Lemma}\label{d}
We have 
\begin{multline} 
\sum_{\mu \in \GTtwo(\a)} d(\a_2)\M{\extwo{\a}}{\extwo{\mu}} x_1^{|\a| - |\mu|} \; \zeta\left(\HL{\mu - \rho}\right) \\
= x_1^{\lambda_1-\lambda_2}\sum_{2 \leq i \leq n}\sum_{\substack{ 2 \leq j \leq n \\ j \neq i}}
tx_i^{\lambda_2}  x_j^{\lambda_2} \defProdtwo{1}{i}{i}\defProdthree{1}{i}{j}{j} \zeta_{i,j}\left(\HL{\extwo{\lambda}}\right) \defdeltaone{1}{1}{q},
\end{multline}
%where $h_{i,j}(\l)$ was defined in \eqref{hij}.
\end{Lemma}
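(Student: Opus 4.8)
The plan is to mirror the proof of Lemma~\ref{w}, now extracting the coefficient $d(\a_2)$ and reducing to the two-row functions $F_{\extwo{\lambda}}(u,v)$ through \eqref{GT2}. By Definition~\ref{d,w} the statistic attached to the top-row entry $\a_2$ is $d(\a_2)=g(p_r(\mu_1))\,g(p_l(\mu_2))$, so it depends only on the first two parts $\mu_1,\mu_2$ of the second row, while $\M{\extwo{\a}}{\extwo{\mu}}$ and the exponent $|\extwo{\a}|-|\extwo{\mu}|$ involve only $\extwo{\mu}$. Since the interleaving constraints decouple (fixing $\mu_1,\mu_2$ with $\a_1\ge\mu_1\ge\a_2$ and $\a_2\ge\mu_2\ge\a_3$ leaves $\extwo{\mu}$ free to range over $\GTtwo(\extwo{\a})$), I would first split the sum over $\mu\in\GTtwo(\a)$ into an outer sum over $(\mu_1,\mu_2)$ and an inner sum over $\extwo{\mu}$. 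Using $\mu-\rho=\conc{(\mu_1-(n-2),\,\mu_2-(n-3))}{(\extwo{\mu}-\rho)}$, the inner sum is exactly $F_{\extwo{\lambda}}(\mu_1-(n-2),\,\mu_2-(n-3))$, and the leading power $x_1^{(\a_1-\mu_1)+(\a_2-\mu_2)}$ separates off.

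Next I would enumerate the contributing pairs. By Table~\ref{table:wd}, $d(\a_2)\ne 0$ only for $(\mu_1,\mu_2)\in\{(\a_2,\a_2),(\a_2,\a_2-1),(\a_2+1,\a_2),(\a_2+1,\a_2-1)\}$, with values $-q,\,t,\,q^2t,\,-qt^2$; all four are admissible because $\a$ is strictly decreasing. Substituting $\a=\lambda+\rho$ gives
\[
\text{LHS}=-q\,x_1^{\lambda_1-\lambda_2+1}F_{\extwo{\lambda}}(\lambda_2,\lambda_2+1)+t\,x_1^{\lambda_1-\lambda_2+2}F_{\extwo{\lambda}}(\lambda_2,\lambda_2)+q^2t\,x_1^{\lambda_1-\lambda_2}F_{\extwo{\lambda}}(\lambda_2+1,\lambda_2+1)-qt^2\,x_1^{\lambda_1-\lambda_2+1}F_{\extwo{\lambda}}(\lambda_2+1,\lambda_2).
\]
Replacing each $F_{\extwo{\lambda}}(u,v)$ by its explicit form \eqref{GT2} and collecting, for every pair $(i,j)$, the common factor $\defProdtwo{1}{i}{i}\defProdthree{1}{i}{j}{j}\,\zeta_{i,j}(\HL{\extwo{\lambda}})\,\defdeltathree{1}{i}{j}{1}{q}$, the accompanying monomial factors as $x_1^{\lambda_1-\lambda_2}x_i^{\lambda_2}x_j^{\lambda_2}(t x_1-q x_j)(x_1-qt x_i)$.

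The main obstacle is that this does not match the right-hand side term by term. Writing $\defdeltaone{1}{1}{q}=(x_1-qx_i)(x_1-qx_j)\,\defdeltathree{1}{i}{j}{1}{q}$, the right-hand side carries instead the coefficient $x_1^{\lambda_1-\lambda_2}x_i^{\lambda_2}x_j^{\lambda_2}\,t(x_1-qx_i)(x_1-qx_j)$, and the two brackets differ by $-q\,x_1(1-t)(x_j-tx_i)$. The crux is therefore to show
\[
\sum_{2\le i\le n}\sum_{\substack{2\le j\le n\\ j\ne i}} \defProdtwo{1}{i}{i}\defProdthree{1}{i}{j}{j}\,\zeta_{i,j}\!\left(\HL{\extwo{\lambda}}\right)\defdeltathree{1}{i}{j}{1}{q}\; x_i^{\lambda_2}x_j^{\lambda_2}(x_j-t x_i)=0,
\]
which is the same vanishing-by-antisymmetry phenomenon used in the proof of Lemma~\ref{originalproof}. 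To verify it I would note that $\zeta_{i,j}(\HL{\extwo{\lambda}})$, $\defdeltathree{1}{i}{j}{1}{q}$, and $x_i^{\lambda_2}x_j^{\lambda_2}$ are symmetric under $i\leftrightarrow j$, that $(x_j-tx_i)$ maps to $(x_i-tx_j)$, and that (isolating the $a=j$, resp.\ $a=i$, factors so the remaining products become symmetric) the ratio $\defProdtwo{1}{i}{i}\defProdthree{1}{i}{j}{j}$ is multiplied by $-\tfrac{x_j-tx_i}{x_i-tx_j}$ under the swap. The product of these last two multipliers is $-1$, so the summand is odd in $i\leftrightarrow j$, the sum cancels in pairs, and the identity holds, completing the matching of the two sides.
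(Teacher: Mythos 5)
Your proof is correct, and it shares its skeleton with the paper's: both identify the left-hand side with the four terms
$q^2t\,x_1^{\lambda_1-\lambda_2}F_{\extwo{\lambda}}(\lambda_2+1,\lambda_2+1)$,
$-q\,x_1^{\lambda_1-\lambda_2+1}F_{\extwo{\lambda}}(\lambda_2,\lambda_2+1)$,
$-qt^2\,x_1^{\lambda_1-\lambda_2+1}F_{\extwo{\lambda}}(\lambda_2+1,\lambda_2)$, and
$t\,x_1^{\lambda_1-\lambda_2+2}F_{\extwo{\lambda}}(\lambda_2,\lambda_2)$
coming from the four nonzero values of $d(\a_2)$, and both pass between these and the double sum over $(i,j)$ via \eqref{GT2}. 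Where you genuinely diverge is in how the two ``middle'' terms are reconciled. The paper expands the right-hand side, obtaining the coefficient $-qt$ on \emph{both} $F_{\extwo{\lambda}}(\lambda_2+1,\lambda_2)$ and $F_{\extwo{\lambda}}(\lambda_2,\lambda_2+1)$, and then invokes Proposition \ref{shift} in the form $F_{\extwo{\lambda}}(\lambda_2,\lambda_2+1)=t\cdot F_{\extwo{\lambda}}(\lambda_2+1,\lambda_2)$ to redistribute these into the coefficients $-q$ and $-qt^2$ that $d(\a_2)$ demands. You instead push both sides all the way down to the $(i,j)$-indexed rational-function sums, factor the per-pair bracket on the left as $(tx_1-qx_j)(x_1-qtx_i)$ against $t(x_1-qx_i)(x_1-qx_j)$ on the right, compute the discrepancy $-qx_1(1-t)(x_j-tx_i)$, and kill it with the same $i\leftrightarrow j$ antisymmetrization used in Lemma \ref{originalproof} (your sign bookkeeping under the swap is correct: the product of $-\frac{x_j-tx_i}{x_i-tx_j}$ from the delta-ratios and $\frac{x_i-tx_j}{x_j-tx_i}$ from the linear factor is $-1$). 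The two mechanisms are mathematically equivalent --- your vanishing sum is precisely $F_{\extwo{\lambda}}(\lambda_2,\lambda_2+1)-t\,F_{\extwo{\lambda}}(\lambda_2+1,\lambda_2)$ written out through \eqref{GT2} --- but the trade-off is real: the paper's citation of Proposition \ref{shift} is shorter, yet that proposition is only proved in the \emph{following} section, so the paper's Section 4 carries a forward reference; your argument is self-contained within the inductive setup and makes explicit that the same antisymmetry phenomenon underlies both Lemma \ref{originalproof} and this lemma.
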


\begin{proof}
Expanding the factor $(x_1-qx_i)(x_1-qx_j)$ from the product $ \defdeltaone{1}{1}{q}$, the right hand side of the above equality becomes
\hspace{1ex}
\begin{equation*}
\sum_{2\leq i \leq n} \sum_{\substack{2 \leq j \leq n \\ j \neq i}} tx_1^{\lambda_1-\lambda_2}x_i^{\lambda_2}x_j^{\lambda_2}  (x_1-qx_i)(x_1-qx_j) \defProdtwo{1}{i}{i}\defProdthree{1}{i}{j}{j} \zeta_{i,j}\left(\HL{\extwo{\lambda}}\right) \defdeltathree{1}{i}{j}{1}{q}. 
\end{equation*}

We see from Proposition \ref{shift} that $F_{\extwo{\lambda}}(\lambda_2,\lambda_2+1) = t\cdot F_{\extwo{\lambda}}(\lambda_2+1,\lambda_2)$. Then distributing $(q^2 x_i x_j  -q x_1 x_i -q x_1 x_j  + x_1^{2})$ over the summation and applying \eqref{GT2} yields 
\begin{align*}
\text{RHS} & =  q^2t x_1^{\lambda_1-\lambda_2} \cdot F_{\extwo{\lambda}}(\lambda_2+1,\lambda_2+1) -q x_1^{\lambda_1-\lambda_2+1} \cdot F_{\extwo{\lambda}}(\lambda_2,\lambda_2+1) \\ 
& \quad -qt^2 x_1^{\lambda_1-\lambda_2+1} \cdot F_{\extwo{\lambda}}(\lambda_2+1,\lambda_2) +t x_1^{\lambda_1-\lambda_2+2} \cdot F_{\extwo{\lambda}}(\lambda_2,\lambda_2,\mu-\rho) .\\
\intertext{Recalling from Definition \ref{c,d} that $d(\a_2)=g(p_r(\mu_1)) \cdot g(p_l(\mu_2))$, we notice that each of the four coefficients in the previous expression corresponds exactly to each of the four possible nonzero values for $d(\a_2)$. Thus, we have}
\text{RHS} & =  \sum_{\mu \in \GTtwo(\a)} d(\a_2)\M{\extwo{\a}}{\extwo{\mu}} x_1^{|\a| - |\mu|} \zeta\left(\HL{\mu - \rho}\right). \qedhere
\end{align*}
\end{proof}

We return to the proof of Theorem \ref{maintheorem}.

\begin{proof}
Lemma \ref{originalproof} gave us that
\begin{multline*}
\HL{\lambda} = \sum_{2 \leq i \leq n} O_i \left( \defProdone{i}{i}\right)
\zeta_i\left(\HL{\exone{\lambda}}\right) \\ 
 -x_1^{\lambda_1-\lambda_2} \sum_{2 \leq i \leq n}\sum_{\substack{ 2 \leq j \leq n \\ j \neq i}}
tx_i^{\lambda_2}  x_j^{\lambda_2} \defProdtwo{1}{i}{i}\defProdthree{1}{i}{j}{j} \zeta_{i,j}\left(\HL{\extwo{\lambda}}\right).
\end{multline*}
Furthermore, assuming the inductive hypotheses \eqref{indhyp1} and \eqref{indhyp2}, Lemmas \ref{w} and \ref{d} state that
\begin{equation*} 
\sum_{\mu \in \GTtwo(\a)} w(\mu_1)\M{\exone{\a}}{\exone{\mu}} x_1^{|\a| - |\mu|} \zeta\left(\HL{\mu-\rho}\right) 
 =    \sum_{2 \leq i \leq n} 
 O_i  \left( \defProdone{i}{i} \right)
 \zeta_i\left(\HL{\exone{\lambda}}\right) \defdeltaone{1}{1}{q}.
\end{equation*}
and
\begin{multline*} 
\sum_{\mu \in \GTtwo(\a)} d(\a_2)\M{\extwo{\a}}{\extwo{\mu}} x_1^{|\a| - |\mu|} \; \zeta\left(\HL{\mu-\rho}\right) \\
= x_1^{\lambda_1-\lambda_2}\sum_{2 \leq i \leq n}\sum_{\substack{ 2 \leq j \leq n \\ j \neq i}}
tx_i^{\lambda_2}  x_j^{\lambda_2} \defProdtwo{1}{i}{i}\defProdthree{1}{i}{j}{j} \zeta_{i,j}\left(\HL{\extwo{\lambda}}\right) \defdeltaone{1}{1}{q}.
\end{multline*}
Hence, it is clear that
\vspace{-1ex}
\begin{multline*}
\defdeltaone{1}{1}{q} \cdot \HL{\lambda} = \sum_{\mu \in \GTtwo(\a)} w(\mu_1)\M{\exone{\a}}{\exone{\mu}} x_1^{|\a| - |\mu|} \zeta(\HL{\mu-\rho})  \\
 - \sum_{\mu \in \GTtwo(\a)} d(\a_2)\M{\extwo{\a}}{\extwo{\mu}} x_1^{|\a| - |\mu|} \zeta(\HL{\mu-\rho}) .
\end{multline*}
Finally, recalling that $\M{\a}{\mu} = w(\mu_1)\M{\exone{\a}}{\exone{\mu}} - d(\a_2)\M{\extwo{\a}}{\extwo{\mu}}$ and observing that $\defWD{n}{q} = \defdeltaone{1}{1}{q} \cdot \zeta(\defWD{n-1}{q})$, we multiply by $\zeta(\defWD{n-1}{q})$ to conclude
\begin{equation*}
\defWD{n}{q} \cdot \HL{\lambda} = \sum_{\mu \in \GTtwo(\a)} \M{\a}{\mu} x_1^{|\a| - |\mu|} \zeta(\WD{n-1}(q)\cdot \HL{\mu-\rho}). \qedhere
\end{equation*}
\end{proof}

\section{Weakly Decreasing Partitions and Raising Operators}

Theorem \ref{maintheorem} expresses the Hall-Littlewood polynomial recursively in terms of Hall-Littlewood polynomials in one fewer variables. 
The partitions $\mu \in \GTtwo(\a)$ indexing these polynomials are guaranteed to be weakly decreasing by the interleaving condition, but they are not necessarily strictly decreasing.%, whereas we wish to sum over $\GT(\a)$. 

To express $\mu \in \GTtwo(\a)$ in terms of strictly decreasing partitions, we relate the Hall-Littlewood polynomial associated to a weakly decreasing partition to one associated to a specific strictly decreasing partition, which is related to the weakly decreasing partition through a specified sequence of Young's raising operators.

\begin{Definition}
A \emph{raising operator} $\R$ is a product of operations $[i\;j]$ with $i\leq j$ acting on some finite tuple $\l$ of nonnegative integers such that
\begin{equation}
[i\;j] \cdot (\lambda_1,\ldots, \lambda_n)=(\lambda_1,\ldots,\lambda_i-1,\ldots,\lambda_j+1,\ldots,\lambda_n).
\end{equation} 
\end{Definition} 
\noindent (Note that these are the inverses of Young's raising operators as defined in \cite[Chapter I]{macdonaldbook}.)

The length of a raising operator $\R$, denoted $l(\R)$, is defined as the number of operators in the minimal decomposition of $\R$ into elementary operators of the form $[i \; i+1]$. The identity raising operator Id acts trivially on the partition and is assigned length zero. 

\begin{Definition}\label{thetaset}
Given a strictly decreasing partition $\a$ of length $n$, we recursively define $\thetaset(\a)$ to be the set of raising operators such that
\begin{itemize}
\item The identity raising operator $\text{Id }\in \thetaset(\a)$, and
\item for all raising operators $\R \in \thetaset(\a)$, if the tuple $\R(\a) = (\a_1', \ldots, \a_n')$ contains consecutive parts $\a_i'$ and $\a_{i+1}'$ such that $\a_i' = \a_{i+1}'+ 2$, then $[i \; i+1] \cdot \R  \in \thetaset(\a)$.  
\end{itemize} 
\end{Definition}

\begin{Example}
For the partition $\a=(6,4,3,1)$, we have the set
\[\thetaset(\a) = \{\text{Id},[1\;2],[1\;3],[2\;4],[3\;4],[1\;2][3\;4] \}.\]
\end{Example}

\begin{Prop}\label{shift} 
Suppose $\l$ is some weakly decreasing partition and $\a = \l + \r$. Then, for each $\R\in \thetaset(\a)$, the following identity holds:
\begin{equation}\label{shifteq} \HL{\R(\lambda)}= t^{l(\R)} \cdot \HL{\lambda}. \end{equation}
\end{Prop}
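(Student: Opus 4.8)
The plan is to reduce the proposition to a single elementary identity and then to induct on the length $l(\R)$ using the recursive construction of $\thetaset(\a)$ in Definition \ref{thetaset}. Throughout, for an arbitrary composition $\nu$ I keep the symmetrized formula $\HL{\nu}=\sum_{\sigma\in S_n}\sigma\!\left(x^{\nu}\tfrac{\defWD{n}{t}}{\WD{n}}\right)$, which defines $\HL{\cdot}$ on all compositions, and I write $s_i\in S_n$ for the adjacent transposition $(i\;\,i+1)$. The crucial translation is the following: if $\R\in\thetaset(\a)$ and $\R(\a)=(\a_1',\ldots,\a_n')$ satisfies $\a_i'=\a_{i+1}'+2$, then, setting $\nu=\R(\l)$ (so that $\nu+\r=\R(\a)$, since raising operators commute with translation by $\r$), we obtain $\nu_i=\nu_{i+1}+1$; and for such $\nu$ the operator $[i\;i+1]$ acts as the transposition, $[i\;i+1]\nu=s_i\nu$. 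Hence the whole proposition reduces to the single identity
\begin{equation}\label{core}
\HL{s_i\nu}=t\,\HL{\nu}\qquad\text{whenever }\nu_i=\nu_{i+1}+1.
\end{equation}

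To prove \eqref{core} I would factor the symmetrizing kernel as $\tfrac{\defWD{n}{t}}{\WD{n}}=\tfrac{x_i-tx_{i+1}}{x_i-x_{i+1}}\cdot C$, where $C$ collects all remaining factors. The factor $C$ is invariant under $s_i$: factors indexed by pairs avoiding $\{i,i+1\}$ are fixed, while for each $p<i$ the two factors indexed by $(p,i)$ and $(p,i+1)$ are interchanged (and similarly for each $q>i+1$ the factors indexed by $(i,q)$ and $(i+1,q)$), so their products are unchanged. I would then partition $S_n$ into the $n!/2$ left cosets $\{w,ws_i\}$ of $\langle s_i\rangle$ and compute one coset's contribution. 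Since $C$ and $\prod_{k\neq i,i+1}x_k^{\nu_k}$ are $s_i$-invariant, the coset contributes $w$ applied to this invariant part times the bracket
\[
x_i^{\nu_i}x_{i+1}^{\nu_{i+1}}\frac{x_i-tx_{i+1}}{x_i-x_{i+1}}+x_i^{\nu_{i+1}}x_{i+1}^{\nu_i}\frac{tx_i-x_{i+1}}{x_i-x_{i+1}}.
\]
Substituting $\nu_i=\nu_{i+1}+1$ and clearing the denominator $x_i-x_{i+1}$ collapses this to $x_i^{\nu_{i+1}}x_{i+1}^{\nu_{i+1}}(x_i+x_{i+1})$, which is $s_i$-symmetric. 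Running the identical computation for $\HL{s_i\nu}$ (which merely swaps the two exponents $\nu_i,\nu_{i+1}$) produces $t\,x_i^{\nu_{i+1}}x_{i+1}^{\nu_{i+1}}(x_i+x_{i+1})$, exactly $t$ times the former. As this equality holds coset by coset, summing over all cosets gives \eqref{core}.

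With \eqref{core} in hand I would finish by induction on the number of elementary factors used to build $\R$ in Definition \ref{thetaset}. The base case $\R=\mathrm{Id}$ is immediate. For the inductive step, suppose $[i\;i+1]\R\in\thetaset(\a)$ arises from $\R\in\thetaset(\a)$; then, with $\nu=\R(\l)$, the admissibility condition forces $\nu_i=\nu_{i+1}+1$ and $[i\;i+1]\R(\l)=s_i\nu$, so \eqref{core} together with the inductive hypothesis yields $\HL{[i\;i+1]\R(\l)}=t\,\HL{\R(\l)}=t^{\,l(\R)+1}\HL{\l}$. It remains to confirm the bookkeeping $l([i\;i+1]\R)=l(\R)+1$, i.e.\ that each admissible elementary step raises the length by exactly one; this holds because every operator in $\thetaset(\a)$ moves boxes consistently to the right, so there is no cancellation and the length is additive along the construction.

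The main obstacle is the verification of \eqref{core}: one must spot the correct $s_i$-invariant factorization of the kernel and check that the differ-by-one hypothesis is precisely what forces the two paired terms, after the factor $x_i-x_{i+1}$ is cleared, to combine into a symmetric polynomial and moreover to yield the clean scalar $t$ relating $\HL{s_i\nu}$ and $\HL{\nu}$. The length-additivity point is routine once one compares the recursive construction with the minimal decomposition into elementary operators.
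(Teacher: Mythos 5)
Your proof is correct and takes essentially the same route as the paper: both arguments use the recursive definition of $\thetaset(\a)$ to reduce the proposition to the single elementary identity $\HL{[i\;i+1](\l)} = t \cdot \HL{\l}$ when $\l_i = \l_{i+1}+1$, and both establish that identity by factoring the kernel into the piece involving only $x_i, x_{i+1}$ times an $(i\;\,i+1)$-invariant remainder. The paper executes the cancellation by multiplying through by $\WD{n}$ and invoking the vanishing of signed symmetrizations of $(i\;\,i+1)$-invariant polynomials, whereas you pair the terms $w$ and $ws_i$ and compute the two-term bracket explicitly; these are the same cancellation in different packaging (and your explicit check that $l([i\;i+1]\R) = l(\R)+1$ is bookkeeping the paper leaves implicit).
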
 

\begin{proof}
It is clear that \eqref{shifteq} holds for $\R = $ Id. Therefore, by the recursive definition of $\thetaset$, it suffices to prove that for an arbitrary tuple $\l = (\l_1, \ldots, \l_n)$ and $\a = \l + \r$ such that $\a_i = \a_{i+1} + 2$, or equivalently $\l_i = \l_{i+1} + 1$, we have

\begin{equation}\label{shifteqn} 
\HL{[i\;i+1](\lambda)}= t \cdot \HL{\lambda}. 
\end{equation} 

To prove \eqref{shifteqn}, let $g(x) = \defWD{n}{t}/(x_i - t x_{i+1})$ and $f(x)=x_1^{\lambda_1}\cdots x_{i-1}^{\lambda_{i-1}} \cdot x_{i+2}^{\lambda_{i+2}}\cdots x_n^{\lambda_n}$, noting that both $g(x)$ and $f(x)$ are invariant under the permutation $(i \; i+1)$. Then, if we take $\lambda_{i+1} = a$ and $\lambda_i = a+1$ for some integer $a$, and let $\sgn \sigma$ be the standard sign function for a permutation $\sigma \in S_n$, we see that
\begin{equation*}
\sum_{\sigma \in S_n}\left(\text{sgn }\sigma\right) \sigma\left(x^\lambda \defWD{n}{t}\right) 
= \sum_{\sigma \in S_n}\left(\text{sgn }\sigma\right) \sigma\left(f(x)g(x)x_i^{a+2} x_{i+1}^{a} - tf(x)g(x)x_i^{a+1} x_{i+1}^{a+1}\right).
\end{equation*} 
If some polynomial $h(x) = (i \; j)(h(x))$ for $(i \; j) \in S_n$, then it is known that $\sum_{\sigma \in S_n}\left(\text{sgn }\sigma\right) \sigma\left(h(x)\right)=0$.

Therefore, since  $tf(x)g(x)x_i^{a+1} x_{i+1}^{a+1}$ is invariant under the permutation $(i \; i+1)$, we have
\begin{equation}\label{unraisedWD}
\sum_{\sigma \in S_n}\left(\text{sgn }\sigma\right) \sigma\left(x^\lambda \defWD{n}{t}\right) = \sum_{\sigma \in S_n}\left(\text{sgn }\sigma\right) \sigma\left(f(x)g(x)x_i^{a+2} x_{i+1}^{a}\right).
\end{equation} 
Similarly, we can find that

\begin{align}
\sum_{\sigma \in S_n}\left(\text{sgn }\sigma\right) \sigma\left(x^{[i\;i+1](\lambda)}\defWD{n}{t}\right) 
& = (- t) \cdot \sum_{\sigma \in S_n}\left(\text{sgn }\sigma\right) \sigma\left(f(x)g(x)x_i^{a} x_{i+1}^{a+2}\right) \nonumber \\ \label{raisedWD}
& = t \cdot \sum_{\sigma \in S_n}\left(\text{sgn }\sigma\right) \sigma\left(f(x)g(x)x_i^{a+2} x_{i+1}^{a}\right), 
\end{align} 

and combining \eqref{unraisedWD} and \eqref{raisedWD} returns
\begin{equation*}
\sum_{\sigma \in S_n}\left(\text{sgn }\sigma\right) \sigma\left(x^{[i\;i+1](\lambda)}\defWD{n}{t}\right)= t\cdot\sum_{\sigma \in S_n}\left(\text{sgn }\sigma\right) \sigma\left(x^{\lambda}\defWD{n}{t}\right).
\end{equation*}
Dividing through by $\WD{n}$ gives us \eqref{shifteqn}, as desired.
\end{proof}

\section{A Deformation of Tokuyama's Formula}
By enabling us to express Hall-Littlewood polynomials $\HL{\mu-\rho}$ with nonstrict $\mu$ in terms of those with strict $\mu$, Proposition \ref{shift} allows us state Theorem \ref{maintheorem} to non-recursively as Theorem \ref{MainTheorem2*}, providing a deformation of Tokuyama's formula.

\begin{restatable}[]{thm}{Maintwo}
\label{MainTheorem2*}
Suppose $\l$ is a weakly decreasing partition of length $n$. Then, the product $\defWD{n}{q} \cdot \HL{\lambda}$ of the deformed Weyl denominator and the Hall-Littlewood polynomial can be expressed as a summation over the set $\GT(\l+\r)$ of all strict Gelfand-Tsetlin patterns of top row $\l+\r$ by
\begin{equation}\label{MainTheorem2eq}
\defWD{n}{q}\cdot \HL{\lambda} = \sum_{T \in \GT(\lambda + \r)} \prod_{i=1}^{n-1}\left(\sum_{\R \in \thetaset(\row{i+1})}t^{l(\R)}\M{\row{i}}{\R(\row{i+1})}\right)x^{m(T)}.
\end{equation} 
Here, the determinant $\M{r_i}{\R(r_{i+1})}$ and the set $\thetaset(r_{i+1})$ are defined in Definitions \ref{matrix} and \ref{thetaset} respectively.
\end{restatable}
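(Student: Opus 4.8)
The plan is to deduce the closed form \eqref{MainTheorem2eq} from the recursive identity of Theorem \ref{maintheorem} by induction on the length $n$ of $\l$, using Proposition \ref{shift} to replace every non-strict $\mu\in\GTtwo(\a)$ occurring in \eqref{maintheoremeq} by a strictly decreasing partition. The base case $n=1$ is immediate: $\defWD{1}{q}=1$ (empty product), $\HL{\lambda}=x_1^{\lambda_1}$, and $\GT(\l+\r)$ is the single one-entry pattern, for which the empty product over $i$ equals $1$ and $x^{m(T)}=x_1^{\lambda_1}$.

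For the inductive step I would begin from \eqref{maintheoremeq} and reorganize its sum over $\mu\in\GTtwo(\a)$ by the strictly decreasing partition to which each $\mu$ is attached. The central input is a correspondence $\mu\leftrightarrow(\nu,\R)$, where $\nu$ is the strictly decreasing partition with $\nu-\r$ equal to the weakly decreasing rearrangement of $\mu-\r$, and $\R$ is the (unique) raising operator with $\R(\nu)=\mu$. Since $\a$ is strictly decreasing, no three consecutive parts of $\mu$ can agree, so every ascent of $\mu-\r$ has height exactly one; this is exactly the configuration governed by Definition \ref{thetaset}, and one checks that $\R\in\thetaset(\nu)$ with $l(\R)$ equal to the number of adjacent transpositions needed to sort $\mu-\r$. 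Because a raising operator subtracts $1$ from one part and adds $1$ to another, it preserves size, giving $|\mu|=|\nu|$, and it commutes with subtracting $\r$, so $\R(\nu-\r)=\R(\nu)-\r=\mu-\r$. Applying Proposition \ref{shift} with $\lambda=\nu-\r$ then yields $\HL{\mu-\r}=t^{l(\R)}\HL{\nu-\r}$.

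Feeding this into \eqref{maintheoremeq}, the summand indexed by $\mu=\R(\nu)$ becomes $t^{l(\R)}\M{\a}{\R(\nu)}\,x_1^{|\a|-|\nu|}\,\zeta\bigl(\defWD{n-1}{q}\cdot\HL{\nu-\r}\bigr)$. Grouping by $\nu$ and using $\M{\a}{\mu}=0$ for $\mu\notin\GTtwo(\a)$ to absorb, at no cost, the pairs $(\nu,\R)$ with $\R(\nu)\notin\GTtwo(\a)$, the expression collapses to
\[ \defWD{n}{q}\cdot\HL{\lambda} = \sum_{\substack{\nu\in\GTtwo(\a)\\ \nu\text{ strict}}}\Bigl(\sum_{\R\in\thetaset(\nu)}t^{l(\R)}\M{\a}{\R(\nu)}\Bigr)x_1^{|\a|-|\nu|}\,\zeta\bigl(\defWD{n-1}{q}\cdot\HL{\nu-\r}\bigr). \]
I would then invoke the inductive hypothesis on $\defWD{n-1}{q}\cdot\HL{\nu-\r}$ (whose top row is $\nu$), expanding it over $\GT(\nu)$, and match the two sides: prepending the row $\a$ atop a strict pattern $T'$ with top row $\nu$ produces a strict pattern $T\in\GT(\a)$ with $\row{1}=\a$ and $\row{2}=\nu$. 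The factor $\sum_{\R\in\thetaset(\nu)}t^{l(\R)}\M{\a}{\R(\nu)}$ supplies the $i=1$ term of the product in \eqref{MainTheorem2eq}, the inductive product supplies the terms $i=2,\dots,n-1$, and $x_1^{|\a|-|\nu|}\,\zeta(x^{m(T')})=x^{m(T)}$ follows from $m_1(T)=|\a|-|\nu|$ and $m_j(T)=m_{j-1}(T')$ for $j\ge 2$.

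The main obstacle is a rigorous proof of the combinatorial correspondence of the second paragraph: that sorting $\mu-\r$ returns a partition $\nu$ which is again strictly decreasing and still lies in $\GTtwo(\a)$, that the resulting raising operator genuinely belongs to $\thetaset(\nu)$ with $l(\R)$ matching the $t$-exponent delivered by Proposition \ref{shift}, and that $\mu\mapsto(\nu,\R)$ is a bijection onto $\{(\nu,\R):\nu\in\GTtwo(\a)\text{ strict},\ \R\in\thetaset(\nu),\ \R(\nu)\in\GTtwo(\a)\}$. The delicate point is to verify that each intermediate transposition of the sort acts on an adjacent ascent of height one, so that the recursive membership condition of Definition \ref{thetaset} is satisfied at every stage and the interleaving $\a_i\ge\nu_i\ge\a_{i+1}$ is never destroyed; it is precisely here that strictness of $\a$, which bounds every run of equal parts of $\mu$ by length two, is indispensable.
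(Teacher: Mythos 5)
Your proposal is correct and takes essentially the same route as the paper's own proof: induction on $n$, starting from the recursion of Theorem \ref{maintheorem}, using Proposition \ref{shift} together with the correspondence between non-strict $\mu \in \GTtwo(\a)$ and pairs $(\nu,\R)$ with $\nu$ strict and $\R \in \thetaset(\nu)$ (absorbing invalid pairs via the convention $\M{\a}{\mu}=0$ for $\mu \notin \GTtwo(\a)$), and then applying the inductive hypothesis to $\zeta\left(\defWD{n-1}{q}\cdot \HL{\nu-\rho}\right)$. Your write-up is in fact more detailed than the paper's, which asserts the uniqueness of the pair $(\tilde\mu,\R)$ — the step you correctly flag as the main point requiring proof — without justification.
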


\begin{proof}
We use induction, where our inductive hypothesis is that \eqref{MainTheorem2eq} holds for all Hall-Littlewood polynomials in $(n-1)$ variables. The base case formula of one variable is easy to check.

We now prove that \eqref{MainTheorem2eq} holds for a Hall-Littlewood polynomial corresponding to an arbitrary weakly decreasing partition $\lambda$ of length $n>1$, assuming the inductive hypothesis. Let $\a = \l+\r$. 
We notice that, while $\a$ must be strictly decreasing, a partition $\mu \in \GTtwo(\a)$ may be weakly decreasing.
If $\mu$ is not strictly decreasing, the computed Hall-Littlewood polynomial $\HL{\mu-\rho}$  has an increasing partition $\mu - \rho$. Proposition \ref{shift} allows us to express these Hall-Littlewood polynomials in terms of some Hall-Littlewood polynomials that are obtained from strictly decreasing $\tilde \mu \in \GTtwo(\a)$. In particular, for each non-strict $\mu \in \GTtwo(\a)$, there exists a unique strict $\tilde \mu \in \GTtwo(\a)$ and a unique element $\R \in \thetaset(\tilde \mu)$ such that $\R(\tilde \mu) = \mu$. Furthermore, for each $\R \in \thetaset(\tilde \mu)$, the tuple $\R(\tilde \mu)$ will either be a valid element of $\GTtwo(\a)$ or will cause $\M{\a}{\R(\tilde \mu)}=0$ and can be neglected. Hence, we may rewrite \eqref{maintheoremeq} of Theorem \ref{maintheorem} as an equivalent summation over strictly decreasing partitions in the set $\GTtwo(\a)$:
\begin{equation}\label{recurs}
\HL{\l} \cdot \defWD{n}{q}  =  \sum_{\substack{\mu \in \GTtwo(\a) \\ \mu \text{ strict}}} \sum_{\R \in \thetaset(\mu)} t^{l(\R)}\M{\a}{\R(\mu)} x_1^{|\a| - |\mu|} \zeta(\defWD{n-1}{q} \cdot \HL{\mu-\rho}).
\end{equation}
By applying the inductive hypothesis to the Hall-Littlewood polynomials $\HL{\mu-\rho}$ of $(n-1)$ variables in \eqref{recurs}, we are reduced to \eqref{MainTheorem2eq}.
\end{proof}

\begin{Example}
We present an example of Theorem \ref{MainTheorem2*} by computing the term obtained from a particular GT pattern. We take $\lambda=(1,0,0)$, so $\a = (3,1,0)$, and we compute the coefficient obtained from the GT pattern

\vspace{2mm}

\centerline{ $ 3 \;\;\;\;\;\;\;\; 1 \;\;\;\;\;\;\;\; 0$}

\centerline{$\,\,\, 2 \;\;\;\;\;\;\;\;  0$}

\centerline{$\,\,\,\,1$}

\vspace{2mm}

Having fixed a particular $T \in \GT(3,1,0)$, we iterate $i$, starting with $i=1$. Then we have the set $\thetaset(\row{2})= \{\text{Id},[1 \; 2]\} $. We denote $[1 \; 2]$ as $\R$. Then $\R(\row{2}) = (1,1)$ and $l(\R) = 1$. 

We thus have two possibilities to consider: one for each raising operator. In each case, we display the relevant patterns and compute the determinant of $M$. To minimize confusion, we have subscripts for integers that appear multiple times in a pattern. 

\noindent Below is the pattern associated to the raising operator $\text{Id}$:

\centerline{$3 \;\;\;\;\;\;\;\; 1 \;\;\;\;\;\;\;\; 0_1$}

\centerline{ $2 \;\;\;\;\;\;\;\; 0_2$} 

\noindent These rows give the matrix and corresponding coefficient: 
\begin{equation}\label{row21}
\left|
\begin{pmatrix}
w(2) & 1 \\
d(1) & w(0_2)   \\
\end{pmatrix} 
\right|
=
\left\vert \begin{matrix}
1-q & 1 \\
-qt^2 & 1+t   \\
\end{matrix}\right\vert = 1-q+t-qt+qt^2. 
\end{equation}

\noindent Below is the pattern associated to the raising operator $\R$:

\centerline{$3 \;\;\;\;\;\;\;\; 1_1 \;\;\;\;\;\;\;\; 0$}

\centerline{ $1_2 \;\;\;\;\;\;\;\; 1_3$}
\noindent As $l(\R_1)=1$ we multiply the determinant of the associated matrix by $t^1$ to obtain
\begin{equation}\label{row22}
t \cdot \left|
\begin{pmatrix}
w(1_2) & 1 \\
d(1_1) & w(1_3)   \\
\end{pmatrix} 
\right|
=
t \cdot \left\vert \begin{matrix}
1 & 1 \\
-q & -q-qt   \\
\end{matrix}\right\vert = -qt^2.  
\end{equation}

That concludes the consideration of possible second rows. We add all of the coefficients from each case of a second row, i.e. \eqref{row21} and \eqref{row22}, to find the total coefficient of 
\begin{equation} \label{row2}
1-q+t-qt+qt^2-qt^2= (1-q) (1 + t). 
\end{equation} 

Now iterating to $i=2$, we have that $\thetaset(\row{3})$ contains only the identity.

\centerline{$\,\,\, 2 \;\;\;\;\;\;\;\;  0$}

\centerline{$\,\,\,\,1$}

\noindent The determinant is simply
\begin{equation}\label{row4}
|w(1)| = (1-q). 
\end{equation} 

Finally, we take the product of all the coefficients we got from each row, i.e. \eqref{row2} and \eqref{row4}, and multiply this with $x^{m(T)}$ where $m(T) = (2,1,1)$. Thus, by Theorem \ref{MainTheorem2*}, the GT pattern contributes the monomial
\begin{equation}\label{exampletotal}
(1-q)^2 (1 + t) x_1^2x_2x_3,
\end{equation}
to the summation. As this is the unique GT pattern with top row $\a=(2,1,0)$ and $m(T)=(2,1,1)$, we should find that \eqref{exampletotal} gives the coefficient of $x_1^2x_2x_3$ in the expansion of $\defWD{3}{q}\cdot \HL{(1,0,0)}$. The reader can verify that this is indeed the case. 

\end{Example}
\section{Specializations} 
 
The results of this paper generalize Tokuyama's formula and several other existing results. We demonstrate a few of these specializations. 

\subsection{Tokuyama's formula.}

Recall from Definition \ref{HL} that $\HLtwo{\l}{x;0} = s_\l(x)$. We know that for all raising operators $\R \neq \text{Id}$, the length of $\R$ is at least $1$. Therefore, setting $t=0$ reduces Theorem \ref{MainTheorem2*} to 
\begin{equation*}
\defWD{n}{q}\cdot s_{\lambda} = \sum_{T \in \GT(\a)} \prod_{i=1}^{n-1}\M{\row{i}}{\row{i+1}}x^{m(T)}.
\end{equation*}
As these are all the identity cases on strict Gelfand-Tsetlin patterns, every row is strictly decreasing. This implies that consecutive entries cannot have $p_r(\tau_{i,j}) = r$ and $p_l(\tau_{i,j+1}) = l$, and consequently $d(\tau_{i,j})$ cannot be $-q$. All the remaining possibilities of $d(\tau_{i,j})$ are reduced to $0$ when $t=0$. Thus, if we let $\row{i+1} = (\mu_1, \ldots, \mu_{n-i})$, the matrix $M(\row{i};\row{i+1})$ simplifies to
 \begin{equation*}
 M(\row{i};\row{i+1}) = 
 \begin{pmatrix}
 w(\mu_1) & 1 &   \; \ldots &  0\\
 0 & w(\mu_2) &   \; \ldots &  0 \\
 \vdots & \vdots &  \ddots & \vdots \\
 0     & 0             & \ldots  & w(\mu_{n-i}) \\
 \end{pmatrix}
 \end{equation*}
Therefore, we have 
 \begin{equation}\label{toksimp} 
 \M{\row{i}}{\row{i+1}}=\prod_{k=1}^{n-i} w(\mu_k).\end{equation}
 Finally, returning to Tokuyama's terminology of left-leaning, right-leaning and special entries from Definition \ref{GTstats}, we find that $w(\tau_{i,j})$ simplifies to
  \begin{equation}
    w(\tau_{i,j}) = 
      \begin{cases}
        -q & \text{if $\tau_{i,j}$ is left-leaning}\\
        1-q & \text{if $\tau_{i,j}$ is special} \\
        1 & \text{if $\tau_{i,j}$ is right-leaning}
      \end{cases},
 \end{equation}
 and, substituting this into \eqref{toksimp} from earlier, we conclude with Tokuyama's formula:
 \begin{equation*}
\defWD{n}{q}\cdot s_{\lambda} = \sum_{T \in \GT(\a)} (-q)^{l(T)}(1-q)^{z(T)} x^{m(T)}.
\end{equation*}

Comparing the results of this paper with Tokuyama's formula reveals some interesting distinctions regarding the structure of the Hall-Littlewood polynomials in relation to the Schur polynomials. Theorem \ref{maintheorem} demonstrates that when expressing $\HL{\lambda}$ recursively in terms of $\HL{\mu-\rho}$,  it is more natural to include several non-strictly decreasing partitions $\mu$  in the summation.  This is not an issue for Tokuyama's formula as Schur polynomials of such non-strictly decreasing partitions $\mu$ are just $s_{\mu-\rho}(x) = 0$. 

 In fact, whilst Theorem \ref{MainTheorem2*} is stated as summations over strict GT patterns, the use of $\thetaset$ is  to allow an implicit summation over all possible non-strict rows. We thus naturally seek a way to consider the contributions of such rows directly, eliminating the more \emph{ad hoc} use of $\thetaset$. Both theorems also highlight the added complexity in a Hall-Littlewood polynomial as they account for the ordering among the entries in a GT pattern instead of simply counting entries as Tokuyama does with $z(T)$ and $l(T)$. 

\subsection{Stanley's formula.}
In \cite{stanley}, Stanley gave a formula for the Hall-Littlewood polynomials at the singular value $t=-1$, also known as the Schur $q$-polynomials, as a generating function of strict GT patterns of top row $\l$:
\begin{equation}\label{stanley's}
\HLtwo{\l}{x;-1} = \sum_{T \in \GT(\l)}2^{z(T)}x^{m(T)}
\end{equation}
Tokuyama subsequently showed in \cite{tokuyama} that his formula yields \eqref{stanley's} when the deformation parameter $q$ is set to $-1$. Theorem \ref{MainTheorem2*} thus specializes to \eqref{stanley's} at $t=0$ and $q=-1$, by virtue of specializing to Tokuyama's result.
However, setting $t$ to $-1$ in Theorem \ref{MainTheorem2*} also gives a deformation along $q$ of \eqref{stanley's}, and we can show that this deformation reduces to \eqref{stanley's} at $q=0$.

If $\tau_{i,j}$ is an entry in a GT pattern, then we call $\tau_{i,j}$ a $p$ entry (where $p$ is a property) if either $p_l(\tau_{i,j})$ or $p_r(\tau_{i,j})=p$. 

With this terminology, examining Theorem \ref{MainTheorem2*} at the singular values of $t=-1$ and $q=0$, we see that any pattern containing an $l$ entry gives an overall coefficient of zero, which is evident through cofactor expansion of $\M{\row{i}}{\row{i+1}}$. 

Thus we may simply sum over the set $\GT l(\a) \subset \GT(\a)$ that contains all GT patterns without $l$ entries. Furthermore, any non-trivial element $\phi \in \thetaset(r_i)$ will either cause an element of $\phi(r_i)$ to be an $l$ entry with respect to $r_{i+1}$, or result in $\phi(r_i) \notin \GTtwo(r_{i-1})$, both resulting in an overall coefficient of zero. Furthermore, we show that GT patterns with consecutive $r$ then $al$ entries give coefficient $0$.

Let $D=\prod_{i=2}^{n}\vert M(\row{i-1};\row{i})\vert$. Assume for the sake of contradiction that there is a GT pattern with consecutive $r$ then $al$ entries for which $D \neq 0$. Let $\tau_{i,j}$ and $\tau_{i,j+1}$ be the lowest consecutive $r$ then $al$ entries (there may be others on the same row, but none below). Then the $r$ entry $\tau_{i,j} = u$ for some $u \in \mathbb{N}$, and the $al$ entry $\tau_{i,j+1} = u-1$. Now consider the entry $\tau_{i+1,j+1}$. Since it cannot be $l$, or else $D=0$, we must have $\tau_{i+1,j+1}=u-1$. Then $w(\tau_{i+1,j+1}) = 0$; so to ensure $M(\row{i};\row{i+1})\neq 0$ (and consequently that $D \neq 0$), we must have that either $p_r(\tau_{i+1,j})=r$ or $p_l(\tau_{i+1,j+2})=al$. But this contradicts our hypothesis. Therefore, any GT pattern containing consecutive $r$ then $al$ entries yields an overall coefficient of zero.

Let the set $\GT l^*(\a) \subset \GT l(\a)$ contain all GT patterns without $l$ entries or consecutive $r$ then $al$ entries. For an entry $\tau_{i,j}$ in a strict GT pattern $T$, recall that $q$ divides $d(\tau_{i,j})$, and hence $d(\tau_{i,j})=0$ at $q=0$, unless $p_r(\tau_{i+1,j})=r$ and $p_l(\tau_{i+1,j+1})=al$. If $T \in \GT l^*(\a)$, then this cannot occur, so $d(\tau_{i,j})=0$ for all entries in any GT pattern $T \in \GT l^*(\a)$. Thus Theorem \ref{MainTheorem2*} simplifies to

\begin{equation}\label{lar}
\defWD{n}{0}\cdot \HLtwo{\l}{x;-1} = \sum_{T \in \GT l^*(\a)} \prod w(\tau_{i,j}) x^{m(T)},
\end{equation}
 where the product is taken over all possible $\tau_{i,j}$. 
 This leads us to introduce a bijective mapping, similar to one used in \cite{tokuyama}: 
\begin{equation}
\begin{array}{ll}
     \;     & \GT l^*(\a) \longrightarrow \GT(\l) \\
\tokbij :   &        \tau_{i,j} \longmapsto \tau_{i,j}+j-n \\
     \;     &     m(T) \longmapsto m(T)-\rho 
\end{array}.
\end{equation}
After applying this mapping, we have that $w(\tau_{i,j})=2$ for all \emph{special} entries and $w(\tau_{i,j}) = 1$ otherwise, and thus \eqref{lar} reduces to 
\begin{equation*}
x^\r\cdot \HLtwo{\l}{x;-1} = x^\r\cdot \sum_{T \in \GT(\lambda)} 2^{z(T)} x^{m(T)},
\end{equation*}
Dividing out by $x^\r$, we obtain \eqref{stanley's}.

\subsection{Monomial symmetric function.}

Recall from Definition \ref{HL} that the monomial symmetric function 
\begin{equation} \label{monomial}
m_\l(x) = \sum_{\sigma \in S_n} \sigma(x^\l)
\end{equation}
We note that, since $\HLtwo{\l}{x;1}=m_\l(x)$, we may obtain a new $q$-deformation of \eqref{monomial} by setting $t=1$ in Theorem \ref{MainTheorem2*}, although the result does not appear as elegant as the $t=0$ and $t=-1$ cases. This specialization is, however, not difficult to prove independently.

\section{Acknowledgments}
The authors thank Paul Gunnells for suggesting this area of research and for his invaluable guidance, and are likewise grateful to Lucia Mocz for her dedicated mentoring. In addition, they would like to thank Glenn Stevens and the PROMYS program, without which this research could not have been pursued. 
\newpage

\bibliographystyle{plain}
\setlength{\itemsep}{2ex}\normalsize
\bibliography{s9_references}

%Remaining edits:

%maybe make lemma proof path
%get rid of alpha
%decide about journal of alg combo template << if we aren't using it, then some of the equations run off the page

%I changed definition of GT_2 to a general partition (instead of strictly decreasing) because we consider GT_2(kappa) where kappa is a weakly decreasing partition 

%I also changed definition of matrix to m(kappa;mu) for similar reasons

\end{document}